\newtheorem{theorem}{Theorem}
\theoremstyle{plain}
\newtheorem{corollary}{Corollary}
\newtheorem{definition}{Definition}
\newtheorem{problem}{Problem}
\newtheorem{proposition}{Proposition}
\newtheorem{remark}{Remark}
\numberwithin{equation}{section}
\newcommand{\abs}[1]{\left\lvert#1\right\rvert}
\begin{document}

\title[The plasticity of some mass transportation networks]{The plasticity of some mass transportation networks in the three dimensional Euclidean Space}
\author{Anastasios N. Zachos}
\address{University of Patras, Department of Mathematics, GR-26500 Rion, Greece}
\email{azachos@gmail.com} \keywords{Fermat-Torricelli problem,
inverse Fermat-Torricelli problem, tetrahedra, plasticity of
closed hexahedra, plasticity of quadrilaterals} \subjclass{51E10,
52A15, 52B10.}
\begin{abstract} We obtain an important generalization of the inverse
weighted Fermat-Torricelli problem for tetrahedra in
$\mathbb{R}^{3}$ by assigning at the corresponding weighted
Fermat-Torricelli point a remaining positive number (residual
weight). As a consequence, we derive a new plasticity principle of
weighted Fermat-Torricelli trees of degree five for boundary
closed hexahedra in $\mathbb{R}^{3}$ by applying a geometric
plasticity principle which lead to the plasticity of mass
transportation networks of degree five in $\mathbb{R}^{3}.$ We
also derive a complete solution for an important generalization of
the inverse weighted Fermat-Torricelli problem for three
non-collinear points and a new plasticity principle of mass
networks of degree four for boundary convex quadrilaterals in
$\mathbb{R}^{2}.$ The plasticity of mass transportation networks
provides some first evidence in a creation of a new field that we
may call in the future Mathematical Botany.
\end{abstract}\maketitle

\section{Introduction}

Let  $A_1A_2A_3A_4A_{5}$ be a closed hexahedron in
$\mathbb{R}^{3},$ $B_{i}$ be a non-negative number (weight) which
corresponds to each vertex $A_i,$ $A_{0}$ be a point in
$\mathbb{R}^{3}$ and $a_{ij}$ be the Euclidean distance of the
linear segment $A_{i}A_{j},$ for $i,j=0,1,2,3,4,5$ respectively.

The weighted Fermat-Torricelli problem for a closed hexahedron
$A_1A_2A_3A_4A_{5}$ in $\mathbb{R}^{3}$ states that:

\begin{problem}\label{5FT}
Find a point $A_0$ which minimizes the sum of the lengths of the
linear segments that connect every vertex $A_{i}$ with $A_0$
multiplied by the positive weight $B_i$:
\begin{equation} \label{eq:001}
\sum_{i=1}^{5}B_{i}a_{0i}=minimum.
\end{equation}
\end{problem}

For $B_{1}=B_{2}=B_{3}$ and $B_{4}=B_{5}=0$ we derive the
classical Fermat-Torricelli problem which has been introduced by
Fermat in 1643 and Torricelli discover the first geometrical
construction in $\mathbb{R}^{2}.$ In 1877, Engelbrecht extended
Torricelli's construction in the weighted case. In 2014, Uteshev
succeeded in finding an elegant algebraic solution of the weighted
Fermat-Torricelli problem in $\mathbb{R}^{2}$ in
\cite{Uteshev:12}. A detailed history of the weighted
Fermat-Torricelli problem is given in \cite{Kup/Mar:97},
\cite{BolMa/So:99} and \cite{Gue/Tes:02}.

In 1997, Y. Kupitz and H. Martini gave a complete study concerning
the existence, uniqueness and a characterization of the weighted
Fermat-Torricelli point for $n$ non-collinear points in
$\mathbb{R}^{m}$ in \cite{Kup/Mar:97} (see also in
\cite[Theorem~18.37, p.~250]{BolMa/So:99}).

\begin{theorem}\label{theor}

Let there be given n non-collinear points in $\mathbb{R}^{m},$
with corresponding positive weights $B_{1},B_{2},...,B_{n}.$

(i)Then the weighted Fermat-Torricelli point $A_0$ of
$\{A_{1}A_{2}A_{3}...A_{n}\}$ exists and is unique.

(ii) If
\[ \|{\sum_{j=1}^{n}B_{j}\vec {u}(A_i,A_j)}\|>B_i, i\neq j. \] for
{i,j}={1,2,3,4,5}, then

(a) the weighted Fermat-Torricelli point does not belong in $\{A_1A_2A_3...A_n\}$ (Weighted Floating Case). \\

(b) \[\sum_{i=1}^{n}B_{i}\vec{u}(A_0,A_i)=\vec{0}\]

(Weighted Floating Case).

(iii) If there is some i with \[ \|{\sum_{j=1}^{n}B_{j}\vec
u(A_i,A_j)}\|\le B_i, i\neq j. \] for {i,j}={1,2,3,4,5}, then the
weighted Fermat-Torricelli point is the vertex $A_i$

(Weighted Absorbed Case),

where $\vec {u}(A_i,A_j)$ is the unit vector with direction from
$A_{i}$ to $A_{j},$ for $i,j=0,1,2,3,..,n$ and $i\ne j.$

\end{theorem}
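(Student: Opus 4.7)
The plan is to analyze the objective function $f \colon \mathbb{R}^m \to \mathbb{R}$ defined by $f(X) = \sum_{i=1}^{n} B_i \|X - A_i\|$, whose global minimizer is precisely the weighted Fermat--Torricelli point. My approach combines three ingredients: coercivity, strict convexity under the non-collinearity hypothesis, and a first-order (subdifferential) optimality analysis.

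For part (i), $f$ is continuous and convex on $\mathbb{R}^m$, and since at least one weight is positive we have $f(X) \ge B_i \|X\| - B_i \|A_i\| \to \infty$ as $\|X\|\to\infty$, so a minimizer exists by continuity and coercivity. For uniqueness, I would show $f$ is strictly convex under the non-collinearity assumption: if $f$ were constant along some segment $[X_1, X_2]$ with $X_1 \neq X_2$, then each convex summand $B_i \| \cdot - A_i\|$ would have to be affine there (a sum of convex functions equal to an affine function forces each summand to be affine). By the equality case of the triangle inequality, this forces every $A_i$ to lie on the line through $X_1$ and $X_2$, contradicting non-collinearity.

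For parts (ii) and (iii), I would apply the convex subdifferential calculus. Outside the set $\{A_1, \ldots, A_n\}$, $f$ is smooth with
\[
\nabla f(X) = -\sum_{i=1}^{n} B_i \vec{u}(X, A_i),
\]
so Fermat's rule $\nabla f(A_0) = 0$ yields the vanishing vector-sum in (ii)(b). At a vertex $A_k$, only the $k$-th summand is non-smooth; its subdifferential at $A_k$ is the closed ball $B_k \overline{B}(0,1)$, while the remaining terms are smooth and contribute the classical gradient. Thus
\[
\partial f(A_k) = -\sum_{j \neq k} B_j \vec{u}(A_k, A_j) + B_k \overline{B}(0,1),
\]
and $0 \in \partial f(A_k)$ exactly when $\bigl\|\sum_{j \neq k} B_j \vec{u}(A_k, A_j)\bigr\| \le B_k$. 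This is precisely the criterion of (iii). Conversely, under the strict-inequality hypothesis of (ii), the subgradient condition fails at every vertex, so no $A_i$ is optimal; the unique minimizer then lies in the floating region, where smoothness and the vanishing gradient give (ii)(b).

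The main obstacle is the careful handling of non-smoothness at the vertices, requiring one to pass from ordinary gradients to subgradients and to correctly identify $\partial\|\cdot\|(0) = \overline{B}(0,1)$ in $\mathbb{R}^m$. A secondary technical point is the precise invocation of the equality case of the triangle inequality when establishing strict convexity, which is exactly the step in which the non-collinearity hypothesis is used; without it, $f$ may be merely convex and admit a whole segment of minimizers along the common line of the $A_i$.
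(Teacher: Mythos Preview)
Your argument is correct and is essentially the standard proof of this classical result. Note, however, that the paper does not supply its own proof of Theorem~\ref{theor}: the theorem is quoted from the literature, with attribution to Kupitz and Martini \cite{Kup/Mar:97} and to \cite[Theorem~18.37, p.~250]{BolMa/So:99}, and is used thereafter as a black box. Your coercivity/strict-convexity argument for (i) and the subdifferential analysis for (ii)--(iii) match the approach in those references, so there is no divergence to compare.
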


The inverse weighted Fermat-Torricelli problem for tetrahedra in
$\mathbb{R}^{3}$ states that:

\begin{problem}
Given a point $A_{0}$ which belongs to the interior of
$A_{1}A_{2}A_{3}A_{4}$ in $\mathbb{R}^{3}$, does there exist a
unique set of positive weights $B_{i},$ such that
\begin{displaymath}
 B_{1}+B_{2}+B_{3}+B_{4} = c =const,
\end{displaymath}
for which $A_{0}$ minimizes
\begin{displaymath}
 f(A_{0})=\sum_{i=1}^{4}B_{i}a_{0i}.
\end{displaymath}

\end{problem}
By letting $B_{4}=0$ and $c=1$ in the inverse weighted
Fermat-Torricelli problem for tetrahedra we obtain the
(normalized) inverse weighted Fermat-Torricelli problem for three
non-collinear points in $\mathbb{R}^{2}.$ In 2002, S. Gueron and
R. Tessler introduce the normalized inverse weighted
Fermat-Torriceli problem for three non-collinear points in
$\mathbb{R}^{2}$ who also gave a positive answer in
\cite{Gue/Tes:02}.

In 2009, a positive answer with respect to the inverse weighted
Fermat-Torricelli problem for tetrahedra is given in
\cite{Zach/Zou:09} and recently, Uteshev also obtain a positive
answer in \cite{Uteshev:12} by using the Cartesian coordinates of
the four non-collinear and non-coplanar fixed vertices. In 2011, a
negative answer with respect to the inverse weighted
Fermat-Torricelli problem for tetragonal pyramids in
$\mathbb{R}^{3}$ is derived in \cite{ZachosZu:11}. This negative
answer lead to an important dependence of the five variable
weights, such that the corresponding weighted Fermat-Torricelli
point remains the same, which we call a plasticity principle of
closed hexahedra. In 2013, we prove a plasticity principle of
closed hexahedra in $\mathbb{R}^{3}$ and a plasticity principle
for convex quadrilaterals in \cite{Zachos:13} and
\cite{Zachos:14}, respectively.

\begin{figure}\label{hex1}
\centering
\includegraphics[scale=0.80]{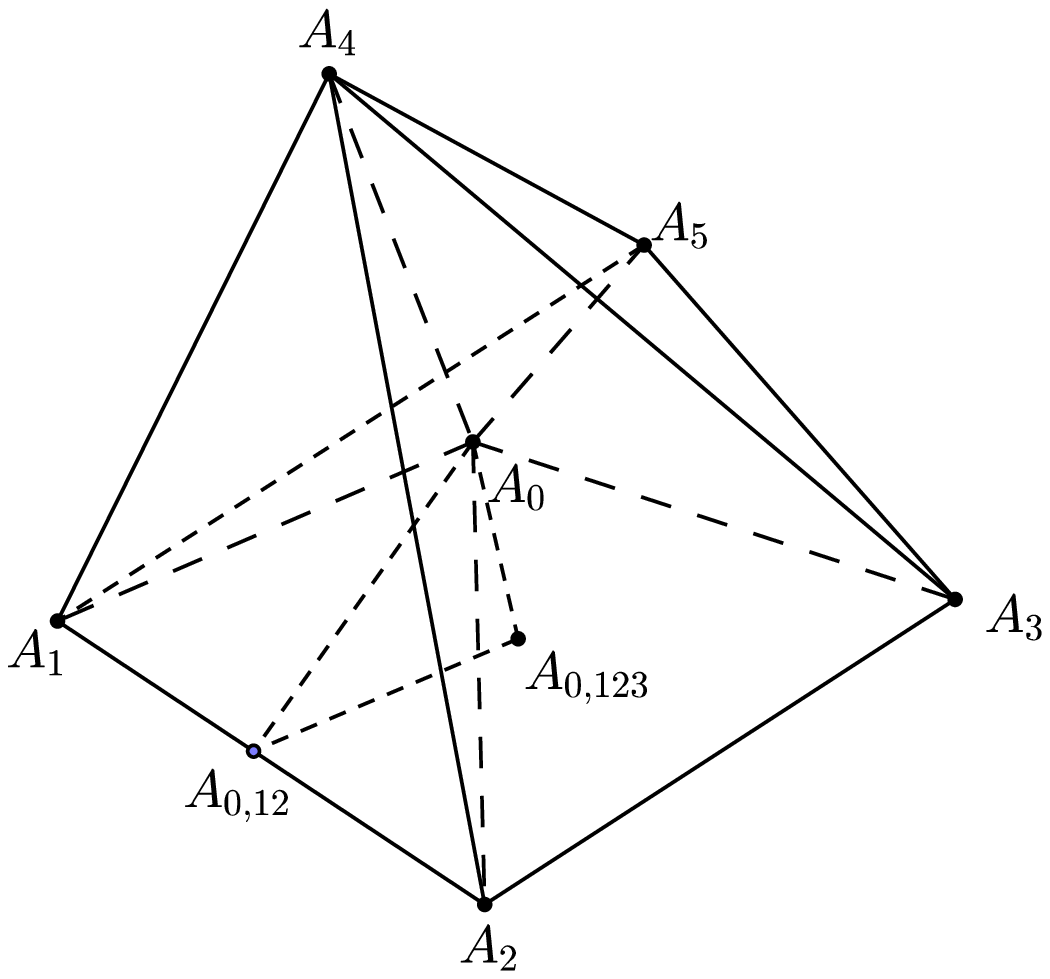}
\caption{}
\end{figure}

In this paper, we consider an important generalization of the
inverse weighted Fermat-Torricelli problem for boundary tetrahedra
in $\mathbb{R}^{3}$ which is derived as a application of the
geometric plasticity of weighted Fermat-Torricelli trees of degree
four for boundary tetrahedra in a two-way communication network
(Section~3, Proposition~3). This new evolutionary approach gives a
new type of plasticity of mass transportation networks of degree
four for boundary tetrahedra and of degree five for boundary
closed hexahedra in $\mathbb{R}^{3}$ (Section~3, Theorem~2,
Proposition~4). As a corollary, we also derive an important
generalization of the inverse weighted Fermat-Torricelli problem
for three non-collinear points and a new type of plasticity for
mass transportation networks of degree four for boundary weighted
quadrilaterals in $\mathbb{R}^{2}$ (Section~4, Theorem~3,
Proposition~5, Theorem~4). It is worth mentioning that this method
provides a unified approach to deal with the inverse weighted
Fermat-Torricelli problem for boundary triangle invented by S.
Gueron and R. Tessler which also includes the weighted absorbed
case (Theorem~1 (iii) for $n=3$).

\section{The Dependence of the angles of a weighted Fermat-Torricelli tree having degree at most four and at most five}

We shall start with the definitions of a tree topology, a
Fermat-Torricelli tree topology, the degree of a boundary vertex
in $\mathbb{R}^{3}$ and the degree of the weighted
Fermat-Torricelli point which is located at the interior of the
convex hull of a closed hexahedron or tetrahedron, in order to
describe the structure of a weighted Fermat-Torricelli tree of a
boundary closed hexahedron or a boundary tetrahedron in
$\mathbb{R}^{3}.$

\begin{definition}{\cite{GilbertPollak:68}}\label{topology}
A tree topology is a connection matrix specifying which pairs of
points from the list
$A_{1},A_{2},...,A_{m},A_{0,1},A_{0,2},...,A_{0,m-2}$ have a
connecting linear segment (edge).
\end{definition}

\begin{definition}{\cite{IvanTuzh:094}}\label{degreeSteinertree}
The degree of a vertex corresponds to the number of connections of
the vertex with linear segments.
\end{definition}

\begin{definition}{\cite{GilbertPollak:68}}\label{Steinertopology}
A Fermat-Torricelli tree topology of degree at most five is a tree
topology with all boundary vertices of a closed hexahedron and one
mobile vertex having at most degree five.
\end{definition}

\begin{definition}\label{FTtree}
A tree of minimum length with a Fermat-Torricelli tree topology of
degree at most five is called a Fermat-Torricelli tree.
\end{definition}

\begin{definition}\label{wFTtree5}
A Fermat-Torricelli tree of weighted minimum length with a
Fermat-Torricelli tree topology of degree at most five is called a
weighted Fermat-Torricelli tree of degree at most five.
\end{definition}

\begin{definition}\label{wFTtree4} A Fermat-Torricelli tree of
weighted minimum length having one zero weight is called a
weighted Fermat-Torricelli tree of degree at most four.
\end{definition}

\begin{definition}\label{wFTtree51} A unique solution of the weighted Fermat-Torricelli problem for closed hexahedra is a unique weighted Fermat-Torricelli tree of
of degree at most five.
\end{definition}

\begin{definition}\label{wFTtree41} A unique solution of the weighted Fermat-Torricelli problem for tetrahedra is a unique weighted Fermat-Torricelli tree (weighted Fermat-Torricelli network)
of degree at most four .
\end{definition}

By following the methodology given in \cite[Lemmas~1,~2
pp.~15-17]{Zachos:13} and \cite[Solution of Problem~2,
pp.~119-120]{Zach/Zou:09}, we shall show that the position of a
weighted Fermat-Torricelli tree w.r. to a boundary tetrahedron is
 determined by five given angles.

We denote by  $\alpha_{i0j}\equiv \angle A_{i}A_{0}A_{j}$ and
$\alpha_{i,j0k}$ the angle which is formed by the linear segment
that connects $A_0$ with the trace of the orthogonal projection of
$A_i$ to the plane $A_jA_0A_k$ with $a_{0i}$, for
$i,j,k,l=1,2,3,4,$ and $i\neq j\neq k\neq i.$


\begin{proposition}\label{importinv2}
The angles $\alpha_{i,k0m}$ depend on exactly five given angles
$\alpha_{102},$ $\alpha_{103},$ $\alpha_{104},$ $\alpha_{203}$ and
$\alpha_{204},$ for $i,k,m=1,2,3,4,$ and $i \ne k \ne m.$
\end{proposition}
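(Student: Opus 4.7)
The plan is to coordinatize the four unit vectors $\vec{u}_i:=\vec{u}(A_0,A_i)$ in $\mathbb{R}^{3}$, to express each $\alpha_{i,k0m}$ in terms of the pairwise inner products $\vec{u}_a\cdot\vec{u}_b=\cos\alpha_{a0b}$, and then to show that the only pairwise angle not directly on the list, namely $\alpha_{304}$, is itself determined by the five given data. By definition $\alpha_{i,k0m}$ is the angle between the ray $A_0A_i$ and the plane $A_kA_0A_m$; since $\vec{u}_k\times\vec{u}_m$ is normal to that plane and has length $\sin\alpha_{k0m}$, a direct vector computation yields
\begin{equation*}
\sin\alpha_{i,k0m}\;=\;\frac{\bigl|\det(\vec{u}_i,\vec{u}_k,\vec{u}_m)\bigr|}{\sin\alpha_{k0m}}.
\end{equation*}
The square of the numerator is the $3\times 3$ Gram determinant of $\vec{u}_i,\vec{u}_k,\vec{u}_m$, so $\sin^{2}\alpha_{i,k0m}$ is a rational function of the three cosines $\cos\alpha_{i0k}$, $\cos\alpha_{i0m}$, $\cos\alpha_{k0m}$ alone.

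Next I would run through the admissible triples $(i,k,m)$ with distinct indices in $\{1,2,3,4\}$: in every case the three ingredient pairwise angles at $A_0$ belong to $\{\alpha_{102},\alpha_{103},\alpha_{104},\alpha_{203},\alpha_{204}\}$, \emph{except} precisely when both $3$ and $4$ appear in the triple, in which case one of the ingredients is $\alpha_{304}$. The whole statement therefore reduces to expressing $\alpha_{304}$ as a function of the five given angles.

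For that step I would place $A_0$ at the origin with $\vec{u}_1=(1,0,0)$ and $\vec{u}_2=(\cos\alpha_{102},\sin\alpha_{102},0)$, and solve, for $i=3,4$ separately, the system
\begin{equation*}
\vec{u}_i\cdot\vec{u}_1=\cos\alpha_{10i},\qquad \vec{u}_i\cdot\vec{u}_2=\cos\alpha_{20i},\qquad \|\vec{u}_i\|=1.
\end{equation*}
This determines $\vec{u}_i$ up to the sign of its $z$-coordinate; choosing $z_3>0>z_4$, which is forced by $A_0$ lying in the interior of the tetrahedron $A_1A_2A_3A_4$, fixes both vectors and produces $\cos\alpha_{304}=\vec{u}_3\cdot\vec{u}_4$ as an explicit function of $\alpha_{102},\alpha_{103},\alpha_{104},\alpha_{203},\alpha_{204}$.

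The hard part will be this last step, specifically justifying the sign choice from the interior hypothesis and checking non-degeneracy, i.e.\ that $\sin\alpha_{102}\neq 0$ and that the radicands defining the $z$-coordinates are strictly positive; both follow from the non-coplanarity of $A_1,A_2,A_3,A_4$ together with $A_0$ being an interior point. Once $\alpha_{304}$ is in hand, substituting back into the Gram formula exhibits every $\alpha_{i,k0m}$ as an explicit function of the five given angles, completing the proof.
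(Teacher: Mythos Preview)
Your proposal is correct and follows essentially the same route as the paper: coordinatize with $\vec{u}_1=(1,0,0)$ and $\vec{u}_2$ in the $xy$-plane, recover $\vec{u}_3,\vec{u}_4$ from the inner-product constraints, compute $\cos\alpha_{304}$ from the five data, and then express each $\alpha_{i,k0m}$ through the three pairwise angles of the triple. Your Gram-determinant formula for $\sin^{2}\alpha_{i,k0m}$ is equivalent to the paper's projection formula for $\cos^{2}\alpha_{i,k0m}$; and where the paper simply records both roots of the resulting quadratic in $\cos\alpha_{304}$, you go further and pin down the correct branch by observing that interiority of $A_0$ forces the $z$-coordinates of $\vec{u}_3$ and $\vec{u}_4$ to have opposite signs---a point the paper leaves implicit.
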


\begin{proof}[Proof of Lemma~\ref{importinv2}:]

We shall use the same expressions used in \cite[Solution of
Problem~2, pp.~119-120]{Zach/Zou:09} for the unit vectors
$\vec{a_{i}}$ in terms of spherical coordinates, for $i=1,2,3,4.$
We denote by
\begin{equation}\label{vec:1}
\vec{a_{1}}=(1,0,0)
\end{equation}
\begin{equation}\label{vec:2}
\vec{a_{2}}=(\cos(\alpha_{102}),\sin(\alpha_{102}),0)
\end{equation}
\begin{equation}\label{vec:3}
\vec{a_{3}}=(\cos(\alpha_{3,102})\cos(\omega_{3,102}),\cos(\alpha_{3,102})\sin(\omega_{3,102}),\sin({\alpha_{3,102}}))
\end{equation}
\begin{equation}\label{vec:4}
\vec{a_{4}}=(\cos(\alpha_{4,102})\cos(\omega_{4,102}),\cos(\alpha_{4,102})\sin(\omega_{4,102}),\sin({\alpha_{4,102}}))
\end{equation}

such that: $\abs{\vec{a_{i}}}=1$.

The angles $\alpha_{3,102},$ $\alpha_{4,102},$ are calculated by
the following two relations in
\cite[Formulas~(10),~(11),p.~120]{Zach/Zou:09}:
\begin{equation}\label{inv3}
\cos^{2}({\alpha_{3,102}})=\frac{\cos^{2}({\alpha_{203}})+\cos^{2}({\alpha_{103}})-2\cos({\alpha_{203}})\cos({\alpha_{103}})\cos({\alpha_{102}})}{\sin^{2}({\alpha_{102}})},
\end{equation}
and
\begin{equation}\label{inv4}
\cos^{2}({\alpha_{4,102}})=\frac{\cos^{2}({\alpha_{204}})+\cos^{2}({\alpha_{104}})-2\cos({\alpha_{204}})\cos({\alpha_{104}})\cos({\alpha_{102}})}{\sin^{2}({\alpha_{102}})}
\end{equation}

The inner product of $\vec{a_{i}}$, $\vec{a_{j}}$ is given by:
\begin{equation}\label{innerp}
\vec{a_{i}}\cdot \vec{a_{j}}=\cos({\alpha_{i0j}}).
\end{equation}

By replacing (\ref{inv3}) and (\ref{inv4}) in (\ref{innerp}), by
eliminating $\omega_{3,102}$ and $\omega_{4,102}$ and by squaring
both parts of the derived equation, we obtain a quadratic equation
w.r. to $\cos\alpha_{304}:$

\begin{eqnarray}\label{quadrangle304}
&&[-\cos\alpha _{103} \cos\alpha _{104}+\cos\alpha
_{304}-\left(-\cos\alpha _{102} \cos\alpha _{103}+\cos\alpha
_{203}\right)\nonumber\\
&& \left(-\cos\alpha _{102} \cos\alpha _{104}+\cos\alpha
_{204}\right) \csc{}^2\alpha _{102}]{}^2=
(1-\cos^{2}\alpha_{3,102})(1-\cos^{2}\alpha_{4,102})\nonumber\\.
\end{eqnarray}

By solving (\ref{quadrangle304}) w.r. to $\cos\alpha_{304},$ we
get:

\begin{eqnarray}\label{calcalpha3041}
&&\cos\alpha_{304}=-\frac{1}{4} [2 b+4 \cos\alpha _{102}
\left(\cos\alpha _{104} \cos\alpha _{203}+\cos\alpha _{103}
\cos\alpha _{204}\right)- \nonumber\\
&&-4\left(\cos\alpha_{103}\cos\alpha_{104}+\cos\alpha _{203}
\cos\alpha_{204}\right)] \csc{}^2\alpha _{102}
\end{eqnarray}

or

\begin{eqnarray}\label{calcalpha3042}
&&\cos\alpha_{304}=\frac{1}{4} [4 \cos\alpha _{103} (\cos\alpha
_{104}-\cos\alpha _{102} \cos\alpha _{204})+\nonumber\\
&&+2 \left(b+2 \cos\alpha _{203} \left(-\cos\alpha _{102}
\cos\alpha _{104}+\cos\alpha _{204}\right)\right)] \csc{}^2\alpha
_{102}\nonumber\\
\end{eqnarray}

where

\begin{eqnarray}\label{calcalpha304auxvar}
b\equiv\sqrt{\prod_{i=3}^{4}\left(1+\cos\left(2 \alpha
_{102}\right)+\cos\left(2 \alpha _{10i}\right)+\cos\left(2 \alpha
_{20i}\right)-4 \cos\alpha _{102} \cos\alpha _{10i} \cos\alpha
_{20i}\right)}\nonumber\\.
\end{eqnarray}

Therefore, $\alpha_{304}$  depends exactly on $\alpha_{102},$
$\alpha_{103},$ $\alpha_{104},$ $\alpha_{203}$ and $\alpha_{204}.$


By projecting the vector $a_{i}$ w.r. to the plane defined by
$\triangle A_{1}A_{0}A_{3}$ or $\triangle A_{2}A_{0}A_{3}$ or
$\triangle A_{1}A_{0}A_{4}$ or $\triangle A_{2}A_{0}A_{4}$ or
$\triangle A_{3}A_{0}A_{4},$ we get:

\begin{equation}\label{invimp1}
\cos^{2}({\alpha_{i,k0m}})=\frac{\sin^{2}({\alpha_{k0m}})-\cos^{2}({\alpha_{m0i}})-\cos^{2}({\alpha_{k0i}})+2\cos({\alpha_{m0i}})\cos({\alpha_{k0i}})\cos({\alpha_{k0m}})}{\sin^{2}({\alpha_{k0m}})}
\end{equation}

Hence, taking into account (\ref{invimp1}) and
(\ref{calcalpha3041}) or (\ref{calcalpha3042})  we derive that
$\alpha_{i,k0m}$ depends on $\alpha_{102},$ $\alpha_{103},$
$\alpha_{104},$ $\alpha_{203}$ and $\alpha_{204}.$

\end{proof}


\begin{proposition}\label{importinv22b}
The angles $\alpha_{i,k0m}$ depend on exactly seven given angles
$\alpha_{102},$ $\alpha_{103},$ $\alpha_{104},$ $\alpha_{105},$
$\alpha_{203},$ $\alpha_{204}$ and $\alpha_{205},$  for
$i,k,m=1,2,3,4,5$ and $i \ne k \ne m.$
\end{proposition}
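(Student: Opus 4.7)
The plan is to mimic the proof of Proposition~\ref{importinv2}, adjoining a fifth unit vector $\vec{a_5}$ and showing that the two additional given angles $\alpha_{105},\alpha_{205}$ together with the five angles from Proposition~\ref{importinv2} suffice to determine every pairwise angle $\alpha_{i0j}$ for $i,j\in\{1,2,3,4,5\}$. Once the pairwise angles are fixed, the projection formula (\ref{invimp1}) immediately yields every $\alpha_{i,k0m}$.

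First, I would keep the spherical frame used in (\ref{vec:1})--(\ref{vec:4}): $\vec{a_1}$ along the $x$-axis and $\vec{a_2}$ in the $xy$-plane. By Proposition~\ref{importinv2}, the pairwise angles $\alpha_{304}$ (and hence the directions of $\vec{a_3},\vec{a_4}$ up to the standard sign branch) are already determined by $\alpha_{102},\alpha_{103},\alpha_{104},\alpha_{203},\alpha_{204}$. I would then write
\[
\vec{a_5}=\bigl(\cos(\alpha_{5,102})\cos(\omega_{5,102}),\ \cos(\alpha_{5,102})\sin(\omega_{5,102}),\ \sin(\alpha_{5,102})\bigr),
\]
and apply the same elimination scheme as in (\ref{inv3}): from $\vec{a_1}\cdot\vec{a_5}=\cos\alpha_{105}$ and $\vec{a_2}\cdot\vec{a_5}=\cos\alpha_{205}$, the angle $\alpha_{5,102}$ is determined by
\[
\cos^{2}(\alpha_{5,102})=\frac{\cos^{2}(\alpha_{205})+\cos^{2}(\alpha_{105})-2\cos(\alpha_{205})\cos(\alpha_{105})\cos(\alpha_{102})}{\sin^{2}(\alpha_{102})},
\]
and, after eliminating $\omega_{5,102}$, we recover $\cos\alpha_{305}$ and $\cos\alpha_{405}$ as solutions of quadratic equations of exactly the same form as (\ref{quadrangle304})--(\ref{calcalpha3042}), with the pair $(\alpha_{20i},\alpha_{10i})$ replaced by $(\alpha_{205},\alpha_{105})$ and the roles of the third/fourth vertex replayed with indices $3$ or $4$.

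The final step is cosmetic: having obtained all ten angles $\alpha_{i0j}$ ($1\le i<j\le 5$) as explicit functions of the seven given angles $\alpha_{102},\alpha_{103},\alpha_{104},\alpha_{105},\alpha_{203},\alpha_{204},\alpha_{205}$, I plug them into (\ref{invimp1}) with $(k,m)$ ranging over the pairs from $\{1,2,3,4,5\}$ and $i$ the remaining index, to conclude that every projection angle $\alpha_{i,k0m}$ is expressible through the seven parameters.

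The main obstacle, as in Proposition~\ref{importinv2}, is the sign ambiguity introduced by the quadratic equations for $\cos\alpha_{305}$ and $\cos\alpha_{405}$ (and the analogous one for $\cos\alpha_{304}$ re-used from the four-vertex case). I would resolve it in the same manner as in (\ref{calcalpha3041})--(\ref{calcalpha3042}): the correct branch is dictated by consistency with the spherical frame, i.e.\ by requiring $\omega_{5,102}$ to lie in the same half-plane as the corresponding $\omega_{3,102},\omega_{4,102}$ once these are fixed by the geometry of the closed hexahedron $A_1A_2A_3A_4A_5$. This branch selection is the only non-routine point; everything else is a direct repetition of the elimination procedure of Proposition~\ref{importinv2}.
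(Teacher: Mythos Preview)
Your proposal is correct and follows essentially the same approach as the paper: set up the five unit vectors in the spherical frame of (\ref{vec:1})--(\ref{vec:4}) with the additional $\vec{a_5}$, use the inner-product relations to determine $\cos\alpha_{304},\cos\alpha_{305},\cos\alpha_{405}$ as explicit functions of the seven given angles (the paper writes out all six branch formulas (\ref{calcalpha30415})--(\ref{calcalpha40525})), and then invoke (\ref{invimp1}). Your discussion of the branch selection is slightly more explicit than the paper's, which simply records both roots without commenting on the choice.
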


\begin{proof}

We consider the directions of five unit vectors which meet a fixed
point $A_{0}.$

For instance, we get:

\begin{equation}\label{vec:15}
\vec{a_{1}}=(1,0,0)
\end{equation}
\begin{equation}\label{vec:25}
\vec{a_{2}}=(\cos(\alpha_{102}),\sin(\alpha_{102}),0)
\end{equation}
\begin{equation}\label{vec:35}
\vec{a_{3}}=(\cos(\alpha_{3,102})\cos(\omega_{3,102}),\cos(\alpha_{3,102})\sin(\omega_{3,102}),\sin({\alpha_{3,102}}))
\end{equation}
\begin{equation}\label{vec:45}
\vec{a_{4}}=(\cos(\alpha_{4,102})\cos(\omega_{4,102}),\cos(\alpha_{4,102})\sin(\omega_{4,102}),\sin({\alpha_{4,102}}))
\end{equation}
\begin{equation}\label{vec:4n45}
\vec{a_{5}}=(\cos(\alpha_{5,102})\cos(\omega_{5,102}),\cos(\alpha_{5,102})\sin(\omega_{5,102}),\sin({\alpha_{5,102}}))
\end{equation}
such that: $\abs{\vec{a_{i}}}=1$. The inner product of
$\vec{a_{i}}$, $\vec{a_{j}}$ is:
\begin{equation}\label{innerp}
\vec{a_{i}}\cdot \vec{a_{j}}=\cos({\alpha_{i0j}}).
\end{equation}
By following a similar process with the proof of
Proposition~\ref{importinv2}, we obtain that $\cos(\alpha_{304}),$
$\cos(\alpha_{305})$ and $\cos(\alpha_{405})$ derived by
(\ref{innerp}) are given by the following six relations which
depend on exactly seven angles $\alpha_{102},$ $\alpha_{103},$
$\alpha_{104},$ $\alpha_{105},$ $\alpha_{203},$ $\alpha_{204}$ and
$\alpha_{205}:$

\begin{eqnarray}\label{calcalpha30415}
&&\cos\alpha_{304}=-\frac{1}{4} [2 b_{304}+4 \cos\alpha _{102}
\left(\cos\alpha _{104} \cos\alpha _{203}+\cos\alpha _{103}
\cos\alpha _{204}\right)- \nonumber\\
&&-4\left(\cos\alpha_{103}\cos\alpha_{104}+\cos\alpha _{203}
\cos\alpha_{204}\right)] \csc{}^2\alpha _{102}
\end{eqnarray}

or

\begin{eqnarray}\label{calcalpha30425}
&&\cos\alpha_{304}=\frac{1}{4} [4 \cos\alpha _{103} (\cos\alpha
_{104}-\cos\alpha _{102} \cos\alpha _{204})+\nonumber\\
&&+2 \left(b_{304}+2 \cos\alpha _{203} \left(-\cos\alpha _{102}
\cos\alpha _{104}+\cos\alpha _{204}\right)\right)] \csc{}^2\alpha
_{102}\nonumber\\
\end{eqnarray}

where

\begin{eqnarray}\label{calcalpha304auxvar5}
b_{304}\equiv\sqrt{\prod_{i=3}^{4}\left(1+\cos\left(2 \alpha
_{102}\right)+\cos\left(2 \alpha _{10i}\right)+\cos\left(2 \alpha
_{20i}\right)-4 \cos\alpha _{102} \cos\alpha _{10i} \cos\alpha
_{20i}\right)}\nonumber\\,
\end{eqnarray}

\begin{eqnarray}\label{calcalpha30515}
&&\cos\alpha_{305}=-\frac{1}{4} [2 b_{305}+4 \cos\alpha _{102}
\left(\cos\alpha _{105} \cos\alpha _{203}+\cos\alpha _{103}
\cos\alpha _{205}\right)- \nonumber\\
&&-4\left(\cos\alpha_{103}\cos\alpha_{105}+\cos\alpha _{203}
\cos\alpha_{205}\right)] \csc{}^2\alpha _{102}
\end{eqnarray}

or

\begin{eqnarray}\label{calcalpha30525}
&&\cos\alpha_{305}=\frac{1}{4} [4 \cos\alpha _{103} (\cos\alpha
_{105}-\cos\alpha _{102} \cos\alpha _{205})+\nonumber\\
&&+2 \left(b_{305}+2 \cos\alpha _{203} \left(-\cos\alpha _{102}
\cos\alpha _{105}+\cos\alpha _{205}\right)\right)] \csc{}^2\alpha
_{102}\nonumber\\
\end{eqnarray}

where

\begin{eqnarray}\label{calcalpha305auxvar5}
b_{305}\equiv\sqrt{\prod_{i=3,i\neq 4}^{5}\left(1+\cos\left(2
\alpha _{102}\right)+\cos\left(2 \alpha _{10i}\right)+\cos\left(2
\alpha _{20i}\right)-4 \cos\alpha _{102} \cos\alpha _{10i}
\cos\alpha _{20i}\right)}\nonumber\\.
\end{eqnarray}

and

\begin{eqnarray}\label{calcalpha40515}
&&\cos\alpha_{405}=-\frac{1}{4} [2 b_{405}+4 \cos\alpha _{102}
\left(\cos\alpha _{105} \cos\alpha _{204}+\cos\alpha _{104}
\cos\alpha _{205}\right)- \nonumber\\
&&-4\left(\cos\alpha_{104}\cos\alpha_{105}+\cos\alpha _{204}
\cos\alpha_{205}\right)] \csc{}^2\alpha _{102}
\end{eqnarray}

or

\begin{eqnarray}\label{calcalpha40525}
&&\cos\alpha_{405}=\frac{1}{4} [4 \cos\alpha _{104} (\cos\alpha
_{105}-\cos\alpha _{102} \cos\alpha _{205})+\nonumber\\
&&+2 \left(b_{405}+2 \cos\alpha _{204} \left(-\cos\alpha _{102}
\cos\alpha _{105}+\cos\alpha _{205}\right)\right)] \csc{}^2\alpha
_{102}\nonumber\\
\end{eqnarray}

where

\begin{eqnarray}\label{calcalpha405auxvar}
b_{305}\equiv\sqrt{\prod_{i=4}^{5}\left(1+\cos\left(2 \alpha
_{102}\right)+\cos\left(2 \alpha _{10i}\right)+\cos\left(2 \alpha
_{20i}\right)-4 \cos\alpha _{102} \cos\alpha _{10i} \cos\alpha
_{20i}\right)}.\nonumber\\
\end{eqnarray}

\end{proof}

\begin{remark}\label{rm1}
We note that the calculations of formulas of $\cos\alpha_{304},$
$\cos\alpha_{305},$ and $\cos\alpha_{405},$ which are derived in
\cite[Lemma~1, pp.~16]{Zachos:13} are corrected and replaced by
(\ref{calcalpha30415}), (\ref{calcalpha30425}),
(\ref{calcalpha30515}), (\ref{calcalpha30525}),
(\ref{calcalpha40515}) and (\ref{calcalpha40525}).
\end{remark}


\section{A generalization of the inverse weighted Fermat-Torricelli problem in $\mathbb{R}^{3}.$}

In this section, we consider mass transportation networks which
deal with weighted Fermat-Torricelli networks of degree at most
four (or five), in which the weights correspond to an
instantaneous collection of images of masses and satisfy some
specific conditions.

We denote by $h_{0,ik}$ the length of the height of $\triangle
A_{0}A_{i}A_{k}$ from $A_{0}$ with respect to $A_{i}A_{j},$ by
$A_{0,ij}$ the intersection of $h_{0,ij}$ with $A_{i}A_{j},$ and
by $h_{0,ijk}$ the distance of $A_{0}$ from the plane defined by
$\triangle A_{i}A_{j}A_{k}.$

We denote by $\alpha,$ the dihedral angle which is formed between
the planes defined by $\triangle A_{1}A_{2}A_{3}$ and $\triangle
A_{1}A_{2}A_{0},$ with $\alpha_{g_{i}}$ the dihedral angle which
is formed by the planes defined by $\triangle A_{1}A_{2}A_{i}$ and
$\triangle A_{1}A_{2}A_{0},$ and by $\alpha_{i,r0s}$ the angle
which is formed by  $a_{0i}$ and the linear segment which connects
$A_{0}$ with the trace from the orthogonal projection of $a_{0i}$
to the plane defined by $\triangle A_{0}A_{r}A_{s},$ for
$i,k,l,m,r,s=0,1,2,3,4,5.$


We proceed by mentioning a fundamental result which we call a
geometric plasticity principle of mass transportation networks for
boundary closed hexahedra and it is proved in \cite[Appendix
A.II]{ZachosZu:11} for closed polyhedra in $\mathbb{R}^{3}.$

\begin{proposition}{\cite[Appendix A.II]{ZachosZu:11}}\label{geomplastprinciple}
Suppose that there is a closed polyhedron $A_1A_2A_{3}A_{4}A_{5}$
in $\mathbb{R}^{3}$ and each vertex $A_i$ has a non-negative
weight $B_i$ for $i=1,2,3,4,5.$ Assume that the floating case
of the generalized weighted Fermat-Torricelli point $A_0$ point is valid: \\
for each $A_i$ $\in$ $\{A_{1},A_{2},A_{3},A_{4},A_{5}\}$
\[ \|{\sum_{j=1}^{5}B_{j}\vec u(A_i,A_j)}\|>B_i, i\neq j. \]
 \\If $A_0$ is connected with every vertex $A_i$ for $i=1,2,3,4,5,$ and a point $A_i'$ is selected  with a non-negative weight $B_i$ of the line that is defined by the
linear segment $A_0A_i$ and a  closed hexahedron $A_1'A_2'...A_n'$ is constructed such that: \\
\[ \|{\sum_{j=1}^{5}B_{j}\vec u(A_i',A_j')}\|>B_i, i\neq j .\]
Then the generalized weighted Fermat-Torricelli point $A_0'$ is
identical with $A_0$ (geometric plasticity principle).
\end{proposition}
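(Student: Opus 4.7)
The plan is to reduce this geometric statement to the uniqueness clause of Theorem~\ref{theor} by exploiting the first-order vector characterization of the weighted Fermat--Torricelli point in the floating case. The proof should be essentially one line of vector algebra once the characterization is written down, so the main task is to organize the invocation of Theorem~\ref{theor} twice: once to extract a balance equation at $A_0$ for the original configuration, and once to recognize $A_0$ as the (unique) minimizer for the new configuration.

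First I would invoke Theorem~\ref{theor}(ii)(b) for the original polyhedron $A_1A_2A_3A_4A_5$. The hypothesis $\|\sum_{j=1}^{5}B_{j}\vec{u}(A_i,A_j)\|>B_i$ at each vertex $A_i$ places the configuration in the weighted floating case, so the weighted Fermat--Torricelli point $A_0$ lies strictly in the interior of the convex hull and satisfies the balance equation
\begin{equation*}
\sum_{j=1}^{5} B_{j}\,\vec{u}(A_0,A_j)=\vec{0}.
\end{equation*}

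The only genuinely geometric step is to read off the effect of the construction. By assumption each $A_i'$ lies on the ray issuing from $A_0$ through $A_i$, so the unit direction vector is scale-invariant along that ray, giving
\begin{equation*}
\vec{u}(A_0,A_i')=\vec{u}(A_0,A_i),\qquad i=1,\ldots,5.
\end{equation*}
Since the weights $B_i$ are unchanged, summation yields $\sum_{j=1}^{5}B_{j}\vec{u}(A_0,A_j')=\vec{0}$, i.e.~the balance equation of Theorem~\ref{theor}(ii)(b) holds at $A_0$ for the new polyhedron $A_1'A_2'\cdots A_5'$. To close, I would feed the standing hypothesis $\|\sum_{j=1}^{5}B_{j}\vec{u}(A_i',A_j')\|>B_i$ back into Theorem~\ref{theor}(ii): the new configuration is itself in the floating case, so its weighted Fermat--Torricelli point $A_0'$ exists and is \emph{unique}; the previous balance equation identifies $A_0$ as such a minimizer, and uniqueness forces $A_0'=A_0$.

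The one place where I expect to have to be careful, and which I would flag explicitly, is the directional convention in the selection of $A_i'$. If the phrase ``on the line defined by $A_0A_i$'' were read to allow $A_i'$ on the opposite ray, then $\vec{u}(A_0,A_i')=-\vec{u}(A_0,A_i)$ and the balance equation would fail unless the sign of $B_i$ were reversed as well. I would therefore open the proof by fixing the natural interpretation that $A_i'$ lies on the same ray from $A_0$ as $A_i$ (the reading that also keeps $A_0$ interior to the new polyhedron, consistent with the floating-case hypothesis on $A_1'A_2'\cdots A_5'$). Modulo this convention, the argument is a two-step application of Theorem~\ref{theor} and the scale invariance of unit vectors, with no further calculation required.
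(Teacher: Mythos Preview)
Your argument is correct and is exactly the natural proof: apply Theorem~\ref{theor}(ii)(b) to obtain the balance equation at $A_0$, observe that unit vectors from $A_0$ are invariant under sliding each $A_i$ along its ray, and then invoke uniqueness for the primed configuration. The paper itself does not supply a proof of this proposition; it is quoted verbatim from \cite[Appendix~A.II]{ZachosZu:11} and used as a black box, so there is no in-paper argument to compare against. Your flagged caveat about ``line'' versus ``ray'' is also appropriate: the intended reading (and the one consistent with $A_0$ remaining interior to $A_1'\cdots A_5'$ and with the floating-case inequality for the primed vertices) is that each $A_i'$ lies on the ray from $A_0$ through $A_i$.
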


The geometric plasticity principle of closed hexahedra connects
the weighted Fermat-Torricelli problem for closed hexahedra with
the modified weighted Fermat-Torricelli problem for boundary
closed hexahedra by allowing a mass flow continuity for the
weights, such that the corresponding weighted Fermat-Torricelli
point remains invariant in $\mathbb{R}^{3}.$

The modified weighted Fermat-Torricelli problem for closed
hexahedra states that:

\begin{problem}{Modified weighted Fermat-Torricelli
problem}\label{modFT}\\
Let  $A_1A_2A_3A_4A_{5}$ be a closed hexahedron in
$\mathbb{R}^{3},$ $\mathcal{B}_{i}$ be a non-negative number
(weight) which corresponds to each linear segment $A_{0}A_{i},$
respectively. Find a point $A_0$ which minimizes the sum of the
lengths of the linear segments that connect every vertex $A_{i}$
with $A_0$ multiplied by the positive weight $\mathcal{B}_i$:
\begin{equation} \label{eq:001m}
\sum_{i=1}^{5}\mathcal{B}_{i}a_{0i}=minimum.
\end{equation}
\end{problem}

By letting $\mathcal{B}{i}=B_{i},$ for $i=1,2,3,4,5$ the weighted
Fermat-Torricelli problem for closed hexahedra (Problem~\ref{5FT})
and the corresponding modified weighted Fermat-Torricelli problem
(Problem~\ref{modFT}) are equivalent by collecting instantaneous
images of the weighted Fermat-Torricelli network via the geometric
plasticity principle.

We note that various generalizations of the modified
Fermat-Torricelli problem for weighted minimal networks of degree
at most three in the sense of the Steiner tree Problem are given
in the classical work of A. Ivanov and A. Tuzhilin in
\cite{IvanTuzh:094}.


We introduce a mixed weighted Fermat-Torricelli problem in
$\mathbb{R}^{3}$ which may give some new fundamental results in
molecular structures and mass transportation networks in a new
field that we may call in the future Mathematical Botany and
possible applications in the geometry of drug design.

We state the mixed Fermat-Torricelli problem for closed hexahedra
in $\mathbb{R}^{3},$ considering a two way communication weighted
network.

\begin{problem}
Given a boundary closed hexahedron $A_{1}A_{2}A_{3}A_{4}A_{5}$ in
$\mathbb{R}^{3}$ having one interior weighted mobile vertex
$A_{0}$ with remaining positive weight $\bar{B_{0}}$ find a
connected weighted system of linear segments of shortest total
weighted length such that any two of the points of the network can
be joined by a polygon consisting of linear segments:
\begin{equation}\label{objin}
f(X)=\bar{B_{1}} a_{1}+\bar{B_{2}} a_{2}+ \bar{B_{3}}
a_{3}+\bar{B_{4}} a_{4}+\bar{B_{5}} a_{5}=minimum,
\end{equation}
where
\begin{equation}\label{imp1mix}
B_{i}+\tilde{B_{i}}=\bar{B_{i}}
\end{equation}
under the following condition:

\begin{equation}\label{cond3mix}
\bar{B_{i}}+\bar{B_{j}}+\bar{B_{k}}+\bar{B_{l}}=\bar{B_{0}}+\bar{B_{m}}
\end{equation}

for $i,j,k,l=1,2,3,4,5$ and $i\ne j\ne k\ne l.$
\end{problem}

The invariance of the mixed weighted Fermat-Torricelli tree of
degree at most five is obtained by the inverse mixed weighted
Fermat-Torricelli problem for closed hexahedra in
$\mathbb{R}^{3}:$


\begin{problem}\label{mixinv5}
Given a point $A_{0}$ which belongs to the interior of
$A_{1}A_{2}A_{3}A_{4}A_{5}$ in $\mathbb{R}^{3}$, does there exist
a unique set of positive weights $B_{i},$ such that
\begin{displaymath}
 \bar{B_{1}}+\bar{B_{2}}+\bar{B_{3}}+\bar{B_{4}}+\bar{B_{5}} = c =const,
\end{displaymath}
for which $A_{0}$ minimizes
\begin{displaymath}
 f(A_{0})=\sum_{i=1}^{5}\bar{B_{i}}a_{0i}
\end{displaymath}
and
\begin{equation}\label{imp1mix}
B_{i}+\tilde{B_{i}}=\bar{B_{i}}
\end{equation}
under the condition for the weights:

\begin{equation}\label{cond3mix}
\bar{B_{i}}+\bar{B_{j}}+\bar{B_{k}}+\bar{B_{l}}=\bar{B_{0}}+\bar{B_{m}}
\end{equation}
for $i,j,k,l,m=1,2,3,4,5,$ and $i\ne j\ne k\ne l\ne m$ (Inverse
mixed weighted Fermat-Torricelli problem for closed hexahedra).
\end{problem}

Letting $\bar{B_{5}}=0$ in Problem~\ref{mixinv5} we obtain the
inverse mixed weighted Fermat-Torricelli problem for tetrahedra.

\begin{theorem}\label{propomix4}
Given the mixed weighted Fermat-Torricelli point $A_{0}$ to be an
interior point of the tetrahedron $A_{1}A_{2}A_{3}A_{4}$ with the
vertices lie on four prescribed rays that meet at $A_{0}$ and from
the five given values of $\alpha_{102},$ $\alpha_{103},$
$\alpha_{104},$ $\alpha_{203},$ $\alpha_{204},$  the positive real
weights $\bar{B_{i}}$ given by the formulas

\begin{equation}\label{inversemix42}
\bar{B_{1}}=\left(\frac{\sin\alpha_{4,203}}{\sin\alpha_{1,203}}\right)\frac{c-\bar{B_{0}}}{2},
\end{equation}
\begin{equation}\label{inversemix43}
\bar{B_{2}}=\left(\frac{\sin\alpha_{4,103}}{\sin\alpha_{2,103}}\right)\frac{c-\bar{B_{0}}}{2},
\end{equation}
\begin{equation}\label{inversemix44}
\bar{B_{3}}=\left(\frac{\sin\alpha_{4,102}}{\sin\alpha_{3,102}}\right)\frac{c-\bar{B_{0}}}{2}
\end{equation}
and
\begin{equation}\label{inversemix41}
\bar{B_{4}}=\frac{c-\bar{B_{0}}}{2}
\end{equation}
give a negative answer w.r. to the inverse mixed weighted
Fermat-Torricelli problem for tetrahedra for $i,j,k,m=1,2,3,4$ and
$i\ne j\ne k\ne m.$
\end{theorem}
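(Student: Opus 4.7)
The plan is to extract the weights $\bar{B}_{i}$ from the floating case optimality at $A_{0}$, fix the overall scale using the mixed mass-flow condition (\ref{cond3mix}), and then note that the residual weight $\bar{B}_{0}$ appears as an unconstrained parameter, which kills uniqueness and produces the negative answer.

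First I would write down the vector balance from Theorem~\ref{theor}(ii)(b) applied to the tetrahedral configuration,
\begin{equation*}
\sum_{i=1}^{4}\bar{B}_{i}\,\vec{u}(A_{0},A_{i})=\vec{0}.
\end{equation*}
Rather than treating this as three scalar equations in four unknowns simultaneously, I would decouple it pairwise by projecting onto the normal direction of each plane $A_{0}A_{k}A_{m}$ with $\{k,m\}\subset\{1,2,3\}$. By the definition of $\alpha_{i,k0m}$, the component of $\vec{u}(A_{0},A_{i})$ perpendicular to this plane has absolute value $\sin\alpha_{i,k0m}$ and vanishes for $i\in\{k,m\}$. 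Because $A_{0}$ is interior to the tetrahedron, the two remaining vertices lie on opposite sides of each such plane, so their signed normal components enter with opposite signs. This collapses the projected balance into the two-term identity
\begin{equation*}
\bar{B}_{i}\sin\alpha_{i,k0m}=\bar{B}_{4}\sin\alpha_{4,k0m}\qquad(i=1,2,3),
\end{equation*}
with $(k,m)=(2,3),(1,3),(1,2)$ respectively. Solving for $\bar{B}_{1},\bar{B}_{2},\bar{B}_{3}$ in terms of $\bar{B}_{4}$ yields exactly the ratios appearing in (\ref{inversemix42})--(\ref{inversemix44}); the fact that those ratios are entirely determined by the five prescribed angles is guaranteed by Proposition~\ref{importinv2}.

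Next I would pin down $\bar{B}_{4}$ by combining two scalar relations: the normalization $\bar{B}_{1}+\bar{B}_{2}+\bar{B}_{3}+\bar{B}_{4}=c$ and the mass-flow condition (\ref{cond3mix}) in the instance $(i,j,k,l,m)=(1,2,3,5,4)$ with $\bar{B}_{5}=0$, which reads $\bar{B}_{1}+\bar{B}_{2}+\bar{B}_{3}=\bar{B}_{0}+\bar{B}_{4}$. Adding $\bar{B}_{4}$ to both sides and invoking the normalization gives $c=\bar{B}_{0}+2\bar{B}_{4}$, that is, $\bar{B}_{4}=\tfrac{1}{2}(c-\bar{B}_{0})$, which is (\ref{inversemix41}); substituting back into the sine ratios recovers (\ref{inversemix42})--(\ref{inversemix44}). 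Positivity of all four $\bar{B}_{i}$ is automatic as long as $0<\bar{B}_{0}<c$, because the interior hypothesis forces every $\alpha_{i,k0m}\in(0,\pi)$ and hence every sine to be strictly positive.

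Finally, to extract the negative content of the statement, I would emphasize that the residual weight $\bar{B}_{0}$ enters every formula (\ref{inversemix42})--(\ref{inversemix41}) but is not pinned down by the prescribed data $\alpha_{102},\alpha_{103},\alpha_{104},\alpha_{203},\alpha_{204}$ and $c$. Consequently the construction produces a one-parameter family of admissible positive weight systems $\{\bar{B}_{i}(\bar{B}_{0})\}_{0<\bar{B}_{0}<c}$, each of which keeps $A_{0}$ as the mixed weighted Fermat-Torricelli point; uniqueness in Problem~\ref{mixinv5} therefore fails, and this is exactly the asserted negative answer. The main obstacle I anticipate is the sign bookkeeping in the pairwise decoupling: one must justify rigorously from $A_{0}\in\operatorname{int}(A_{1}A_{2}A_{3}A_{4})$ that for every choice of $\{k,m\}$ the two remaining vertices are genuinely separated by the plane $A_{0}A_{k}A_{m}$, which is what turns a three-dimensional vector equation into clean two-term sine identities and ultimately guarantees that the formulas in the statement are both well-defined and strictly positive.
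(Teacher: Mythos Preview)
Your argument is correct and reaches the same formulas as the paper, but the route is genuinely different. The paper parametrizes the position of $A_{0}$ by the three edge lengths $a_{01},a_{02},a_{03}$, writes $a_{04}$ explicitly as a function of these via the distance relations (\ref{impa03})--(\ref{a04da01a02a03}), and then differentiates the objective $\sum_{i}\bar B_{i}a_{0i}$ with respect to each $a_{0i}$; the partial derivatives $\partial a_{04}/\partial a_{0i}$ are computed directly and shown to equal $-\sin\alpha_{4,k0m}/\sin\alpha_{i,k0m}$, which produces the ratios (\ref{derv1nn})--(\ref{derv3nn}). You instead invoke the vector balance of Theorem~\ref{theor}(ii)(b) and project onto the normal of each plane $A_{0}A_{k}A_{m}$, which kills two terms immediately and yields the same two-term sine identities without any explicit distance formulas or differentiation. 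Your approach is shorter and more conceptual, and the sign justification you flag (that interiority of $A_{0}$ forces the two remaining vertices to lie on opposite sides of each such plane) is exactly right: writing $A_{0}=\sum\lambda_{i}A_{i}$ with all $\lambda_{i}>0$ and dotting with the plane normal gives it in one line. What the paper's variational route buys is that it dovetails with the machinery used later for the hexahedral plasticity in Proposition~\ref{propdynamic1mix}, where the same differentiation scheme is recycled with $a_{05}$ added; your projection argument would also extend there, but the paper's computations are already set up for it. The extraction of $\bar B_{4}=(c-\bar B_{0})/2$ and the concluding non-uniqueness via the free parameter $\bar B_{0}$ are identical in both proofs.
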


\begin{proof}
We denote by $B_{i}$ a mass flow which is transferred from $A_{i}$
to $A_{0}$ for $i=1,2,3$ by $B_{0}$ a residual weight which
remains at $A_{0}$ and by $B_{4}$ a mass flow which is transferred
from $A_{0}$ to $A_{4}.$

We denote by $\tilde{B_{i}}$ a mass flow which is transferred from
$A_{0}$ to $A_{i}$ for $i=1,2,3$ by $\tilde{B_{0}}$ a residual
weight which remains at $A_{0}$ and by $\tilde{B_{4}}$ a mass flow
which is transferred from $A_{4}$ to $A_{0}.$

Hence, we get:

\begin{equation}\label{weight1outflow}
B_{1}+B_{2}+B_{3}=B_{4}+B_{0}
\end{equation}

and

\begin{equation}\label{weight2inflow}
\tilde{B_{1}}+\tilde{B_{2}}+\tilde{B_{3}}+\tilde{B_{0}}=\tilde{B_{4}}.
\end{equation}

By adding (\ref{weight1outflow}) and (\ref{weight2inflow}) and by
letting $\bar{B_{0}}=B_{0}-\tilde{B_{0}}$ we get:

\begin{equation}\label{weight12inoutflow}
\bar{B_{1}}+\bar{B_{2}}+\bar{B_{3}}=\bar{B_{4}}+\bar{B_{0}}
\end{equation}

such that:

\begin{equation}\label{weight12inflowsum}
\bar{B_{1}}+\bar{B_{2}}+\bar{B_{3}}+\bar{B_{4}}=c,
\end{equation}
where $c$ is a positive real number.

Therefore, the objective function takes the form:

\begin{equation}\label{nobjmod1}
\sum_{i=1}^{4}B_{i}a_{0i}+\sum_{i=1}^{4}\tilde{B_{i}}a_{0i}=minimum,
\end{equation}

which yields

\begin{equation}\label{nobjmod}
\sum_{i=1}^{4}\bar{B_{i}}a_{0i}=minimum.
\end{equation}

We start by expressing the lengths $a_{0i},$ w.r. to $a_{0j},
a_{0k}, a_{0l}.$

For instance, the lengths $a_{03}$ and $a_{04}$ are expressed w.r.
to $a_{01},$ $a_{02}$ and the dihedral angle $\alpha$ taking into
account the two formulas given in \cite[Formulas (2.14), (2.20)
p.~116]{Zach/Zou:09}:

\begin{equation}\label{impa03}
a_{03}^2=a_{02}^2 +a_{23}^2-2 a_{23}[\sqrt{a_{02}^2-h_{0,12}^2}
\cos\alpha_{123} +h_{0,12}\sin\alpha_{123}\cos\alpha ]
\end{equation}
and
\begin{equation}\label{impa04}
a_{04}^2=a_{02}^2 +a_{24}^2-2 a_{24}[\sqrt{a_{02}^2-h_{0,12}^2}
\cos\alpha_{124}
+h_{0,12}\sin\alpha_{124}\cos(\alpha_{g_{4}}-\alpha) ]
\end{equation}


By eliminating $\alpha$ from (\ref{impa03}) and (\ref{impa04}) we
get:

\begin{eqnarray}\label{a04da01a02a03}
&&a_{04}^2=a_{02}^2 +a_{24}^2-2 a_{24}[\sqrt{a_{02}^2-h_{0,12}^2}
\cos\alpha_{124}{} \nonumber \\
&&{}+h_{0,12}\sin\alpha_{124}(\cos\alpha_{g_{4}}\left(
\frac{\left(\frac{a_{02}^2+a_{23}^2-a_{03}^2}{2 a_{23}}
\right)-\sqrt{a_{02}^2-h_{0,12}^2}\cos\alpha_{123}}{h_{0,12}\sin\alpha_{123}}
\right)+{} \nonumber \\
&&{}+\sin\alpha_{g_{4}}\sin\arccos\left(
\frac{\left(\frac{a_{02}^2+a_{23}^2-a_{03}^2}{2 a_{23}}
\right)-\sqrt{a_{02}^2-h_{0,12}^2}\cos\alpha_{123}}{h_{0,12}\sin\alpha_{123}}
\right) ) ]\nonumber\\
\end{eqnarray}

By differentiating (\ref{a04da01a02a03}) w.r. to $a_{01},$
$a_{02}$ and $a_{03},$ we obtain:

\begin{equation}\label{derv1n}
\frac{\partial a_{04}}{\partial
a_{01}}=-\frac{\sin\alpha_{4,203}}{\sin\alpha_{1,203}}
\end{equation}

\begin{equation}\label{derv2n}
\frac{\partial a_{04}}{\partial
a_{02}}=-\frac{\sin\alpha_{4,103}}{\sin\alpha_{2,103}}
\end{equation}

\begin{equation}\label{derv3n}
\frac{\partial a_{04}}{\partial
a_{03}}=-\frac{\sin\alpha_{4,102}}{\sin\alpha_{3,102}}.
\end{equation}

By differentiating (\ref{nobjmod}) w.r. to $a_{01},$ $a_{02}$ and
$a_{03},$ and taking into account (\ref{derv1n}), (\ref{derv2n})
and (\ref{derv3n}), we obtain:

\begin{equation}\label{derv1nn}
\frac{\bar{B_{1}}}{\bar{B_{4}}}=\frac{\sin\alpha_{4,203}}{\sin\alpha_{1,203}},
\end{equation}

\begin{equation}\label{derv2nn}
\frac{\bar{B_{2}}}{\bar{B_{4}}}=\frac{\sin\alpha_{4,103}}{\sin\alpha_{2,103}}
\end{equation}
and
\begin{equation}\label{derv3nn}
\frac{\bar{B_{3}}}{\bar{B_{4}}}=\frac{\sin\alpha_{4,102}}{\sin\alpha_{3,102}}.
\end{equation}

By following a similar process and by expressing $a_{0i}$ as a
function w.r. to $a_{0j},$ $a_{0k}$ and $a_{0l},$ for
$i,j,k,l=1,2,3,4$ and $i\ne j\ne k\ne l,$ we get:

\begin{equation}\label{derv3nnaijkl}
\frac{\bar{B_{i}}}{\bar{B_{j}}}=\frac{\sin\alpha_{j,k0l}}{\sin\alpha_{i,k0l}}.
\end{equation}

By subtracting (\ref{weight12inoutflow}) from
(\ref{weight12inflowsum}) we obtain (\ref{inversemix41}).

By replacing (\ref{inversemix41}) in (\ref{derv1nn}),
(\ref{derv2nn}) and (\ref{derv3nn}) and taking into account
Lemma~\ref{importinv2} we derive (\ref{inversemix42}),
(\ref{inversemix43}) and (\ref{inversemix44}). Therefore, the
weights $\bar{B_{i}}$ depend on the residual weight $\bar{B_{0}}$
and the five given angles $\alpha_{102},$ $\alpha_{103},$
$\alpha_{104},$ $\alpha_{203}$ and $\alpha_{204}.$

\end{proof}

\begin{corollary}\label{mixinv1}
If
$\alpha_{102}=\alpha_{103}=\alpha_{104}=\alpha_{203}=\alpha_{204}=\arccos\left(-\frac{1}{3}\right),$
$\bar{B_{0}}=\frac{1}{2}$ and
\[\bar{B_{1}}+\bar{B_{2}}+\bar{B_{3}}+\bar{B_{4}}=1,\]
then
$\bar{B_{1}}=\bar{B_{2}}=\bar{B_{3}}=\bar{B_{4}}=\frac{1}{4}.$
\end{corollary}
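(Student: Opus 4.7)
The plan is to deduce the corollary as a direct specialization of Theorem~\ref{propomix4}, exploiting the symmetry of the regular-tetrahedron configuration. First, I would substitute $c=1$ and $\bar{B_{0}}=\frac{1}{2}$ directly into (\ref{inversemix41}) to get $\bar{B_{4}}=\frac{1}{4}$ immediately. The remaining task is to show that the three sine ratios appearing in (\ref{inversemix42})--(\ref{inversemix44}) all equal $1$.

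Next, I would verify that the hypothesis on the five angles $\alpha_{102},\alpha_{103},\alpha_{104},\alpha_{203},\alpha_{204}$, all equal to $\arccos(-\frac{1}{3})$, forces the remaining angle $\alpha_{304}$ at $A_{0}$ to equal $\arccos(-\frac{1}{3})$ as well. This is a short substitution into formula (\ref{calcalpha3041}) or (\ref{calcalpha3042}) of Proposition~\ref{importinv2}: with $\cos\alpha_{10i}=\cos\alpha_{20i}=-\frac{1}{3}$ throughout and $\csc^{2}\alpha_{102}=\frac{9}{8}$, a short calculation with the auxiliary quantity $b$ of (\ref{calcalpha304auxvar}) selects the root $\cos\alpha_{304}=-\frac{1}{3}$. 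Consequently, all six pairwise angles $\alpha_{i0j}$ at $A_{0}$ coincide, which is precisely the configuration of four unit vectors pointing to the vertices of a regular tetrahedron.

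Now I would invoke formula (\ref{invimp1}): each quantity $\cos^{2}\alpha_{i,k0m}$ is a symmetric rational function of only the three angles $\alpha_{i0k},\alpha_{i0m},\alpha_{k0m}$. Since in our configuration these three inputs are all equal to $\arccos(-\frac{1}{3})$ irrespective of the chosen indices, every projection angle $\alpha_{i,k0m}$ takes the same value. Hence
\[
\sin\alpha_{4,203}=\sin\alpha_{1,203},\quad \sin\alpha_{4,103}=\sin\alpha_{2,103},\quad \sin\alpha_{4,102}=\sin\alpha_{3,102},
\]
so the three ratios in (\ref{inversemix42})--(\ref{inversemix44}) equal $1$, yielding $\bar{B_{1}}=\bar{B_{2}}=\bar{B_{3}}=\bar{B_{4}}=\frac{1}{4}$.

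The only step requiring any genuine attention is the sign selection in Proposition~\ref{importinv2} that pins down $\alpha_{304}=\arccos(-\frac{1}{3})$ rather than the spurious root (which would correspond to a degenerate coplanar configuration); everything else is a direct substitution into the formulas of Theorem~\ref{propomix4}, with symmetry doing all of the remaining work.
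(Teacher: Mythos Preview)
Your proposal is correct and follows essentially the same approach as the paper: compute $\alpha_{304}$ from (\ref{calcalpha3041})/(\ref{calcalpha3042}) to see that all six angles $\alpha_{i0j}$ coincide (discarding the degenerate root $\cos\alpha_{304}=1$), then substitute $\bar{B_{0}}=\tfrac12$, $c=1$ into (\ref{inversemix41})--(\ref{inversemix44}). The paper's proof is terser and leaves the equality of the sine ratios implicit, whereas you make this explicit via (\ref{invimp1}) and symmetry; this is a welcome clarification but not a different method.
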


\begin{proof}
By letting
\[\alpha_{102}=\alpha_{103}=\alpha_{104}=\alpha_{203}=\alpha_{204}=\arccos\left(-\frac{1}{3}\right)\]
in (\ref{calcalpha3041}) and (\ref{calcalpha3041}), we derive that
$\cos\alpha_{304}=-\frac{1}{3}$ or $\cos\alpha_{304}=1$ which
yield $\alpha_{304}=-\arccos\left(\frac{1}{3}\right).$ By
replacing $\bar{B_{0}}=\frac{1}{2}$ in (\ref{inversemix42}),
(\ref{inversemix43}), (\ref{inversemix44}) and
(\ref{inversemix41}) we derive
$\bar{B_{1}}=\bar{B_{2}}=\bar{B_{3}}=\bar{B_{4}}=\frac{1}{4}.$

\end{proof}

\begin{corollary}\label{mixinv2}
For
\begin{equation}\label{derv3nnn0}
\bar{B_{0}}=c\left(1-\frac{2}{1+\frac{\sin\alpha_{4,203}}{\sin\alpha_{1,203}}+\frac{\sin\alpha_{4,103}}{\sin\alpha_{2,103}}+\frac{\sin\alpha_{4,102}}{\sin\alpha_{3,102}}}\right).
\end{equation}

we derive a unique solution

\begin{equation}\label{dervnnBi}
\bar{B_{i}}=\frac{c}{1+\frac{\sin\alpha_{i,j0k}}{\sin\alpha_{l,j0k}}+\frac{\sin\alpha_{i,j0l}}{\sin\alpha_{k,j0l}}+\frac{\sin\alpha_{i,k0l}}{\sin\alpha_{j,k0l}}}.
\end{equation}
for $i,j,k,l=1,2,3,4$ and $i\neq j\neq k\neq l,$ which coincides
with the unique solution of the inverse weighted Fermat-Torricelli
problem for tetrahedra.
\end{corollary}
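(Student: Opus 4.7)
The plan is to specialize the formulas of Theorem~\ref{propomix4} by imposing the normalization $\bar{B_1}+\bar{B_2}+\bar{B_3}+\bar{B_4}=c$ and then solving algebraically for the residual $\bar{B_0}$ and for the individual weights.

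First, summing (\ref{inversemix42})--(\ref{inversemix44}) together with (\ref{inversemix41}) and factoring out $(c-\bar{B_0})/2$ produces
\begin{equation*}
\frac{c-\bar{B_0}}{2}\left(1 + \frac{\sin\alpha_{4,203}}{\sin\alpha_{1,203}} + \frac{\sin\alpha_{4,103}}{\sin\alpha_{2,103}} + \frac{\sin\alpha_{4,102}}{\sin\alpha_{3,102}}\right) = c.
\end{equation*}
Writing $S$ for the bracketed sum, this gives $(c-\bar{B_0})/2 = c/S$, whence $\bar{B_0}=c(1-2/S)$, which is exactly (\ref{derv3nnn0}). In particular $\bar{B_4}=c/S$, which matches (\ref{dervnnBi}) for $i=4$ with $\{j,k,l\}=\{1,2,3\}$.

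Next, substituting $(c-\bar{B_0})/2=c/S$ back into (\ref{inversemix42})--(\ref{inversemix44}) expresses $\bar{B_1},\bar{B_2},\bar{B_3}$ in the form (\ref{dervnnBi}): each ratio $\sin\alpha_{4,r0s}/\sin\alpha_{i,r0s}$ in the numerator of $\bar{B_i}$ combines with $S$ in the denominator so that, after one application of the general identity (\ref{derv3nnaijkl}), the denominator becomes the required expression involving only sines whose indices avoid $i$. Since Theorem~\ref{propomix4} singles out index $4$ only as a matter of bookkeeping and the relabeling producing (\ref{derv3nnaijkl}) lets any vertex play that role, the same computation with an arbitrary reference index $i$ yields (\ref{dervnnBi}) in full generality.

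For the coincidence claim, observe that the ratios $\bar{B_i}/\bar{B_j}=\sin\alpha_{j,k0l}/\sin\alpha_{i,k0l}$ supplied by (\ref{derv3nnaijkl}) are precisely the ratios characterizing the weights in the classical inverse weighted Fermat-Torricelli problem for tetrahedra, since they are equivalent to the first-order equilibrium condition $\sum_{i=1}^{4}\bar{B_i}\vec{u}(A_0,A_i)=\vec{0}$ of Theorem~\ref{theor}(ii)(b); combined with the normalization $\sum\bar{B_i}=c$ these ratios determine the $\bar{B_i}$ uniquely, recovering the unique solution established in \cite{Zach/Zou:09} and \cite{Uteshev:12}. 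The computations are routine and I foresee no serious obstacle; the only delicate point is the index bookkeeping when moving from the reference $i=4$ used in the proof of Theorem~\ref{propomix4} to an arbitrary $i$, together with the a posteriori check that the prescribed $\bar{B_0}$ is positive, which is equivalent to $S>2$ and reflects the floating regime assumed throughout.
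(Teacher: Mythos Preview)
Your proposal is correct and follows essentially the same route as the paper: substitute the prescribed value of $\bar{B_0}$ into the formulas (\ref{inversemix42})--(\ref{inversemix41}) of Theorem~\ref{propomix4} and simplify. The paper's own proof is a single sentence doing exactly this substitution, whereas you additionally motivate the formula (\ref{derv3nnn0}) by summing the weights and imposing $\sum\bar{B_i}=c$, and you spell out the index relabeling via (\ref{derv3nnaijkl}) and the link to the classical inverse problem---details the paper leaves implicit.
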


\begin{proof}
By replacing (\ref{derv3nnn0}) in (\ref{inversemix42}),
(\ref{inversemix43}), (\ref{inversemix44}) and
(\ref{inversemix41}) we obtain (\ref{dervnnBi}), which yields a
positive answer to the inverse weighted Fermat-Torricelli problem
for tetrahedra in $\mathbb{R}^{3}.$
\end{proof}


We proceed by generalizing the equations of (dynamic) plasticity
for closed hexahedra, taking into account the residual weight
$\bar{B_{0}}$ which exist at the knot $A_{0},$ by following the
method used in \cite[Proposition~1, p.~17]{Zachos:13}.


We set $sgn_{i,j0k}=\begin{cases} +1,& \text{if $A_{i}$
is upper from the plane $A_{j}A_{0}A_{k}$ },\\
0,& \text{if $A_{i}$ belongs to the plane $A_{j}A_{0}A_{k}$},\\
-1, & \text{if $A_{i}$ is under the plane $A_{j}A_{0}A_{k}$ } ,
\end{cases}$

with respect to an outward normal vector $N_{j0k}$ for
$i,j,k=1,2,3,4,5,$ $i \ne j\ne k.$ We remind that the position of
an arbitrary directed plane is determined by the outward normal
and the distance from the weighted Fermat-Torricelli point
$A_{0}.$

\begin{proposition}\label{propdynamic1mix}
The following equations point out a new plasticity of mixed
weighted closed hexahedra with respect to the non-negative
variable weights $(\bar{B_{i}})_{12345}$ in $\mathbb{R}^{3}$:

\begin{equation}\label{dynamicplasticity2}
(\frac{\bar{B_{1}}}{\bar{B_{4}}})_{12345}=-(\frac{sgn_{4,203}}{sgn_{1,203}})(\frac{\bar{B_{1}}}{\bar{B_{4}}})_{1234}(1+\frac{sgn_{5,203}}{sgn_{4,203}}(\frac{\bar{B_{5}}}{\bar{B_{4}}})_{12345}(\frac{\bar{B_{4}}}{\bar{B_{5}}})_{2345})
\end{equation}
\begin{equation}\label{dynamicplasticity3}
(\frac{\bar{B_{2}}}{\bar{B_{4}}})_{12345}=-(\frac{sgn_{4,103}}{sgn_{2,103}})(\frac{\bar{B_{2}}}{\bar{B_{4}}})_{1234}(1+\frac{sgn_{5,103}}{sgn_{4,103}}(\frac{\bar{B_{5}}}{\bar{B_{4}}})_{12345}(\frac{\bar{B_{4}}}{\bar{B_{5}}})_{1345})
\end{equation}
\begin{equation}\label{dynamicplasticity1}
(\frac{\bar{B_{3}}}{\bar{B_{4}}})_{12345}=-(\frac{sgn_{4,102}}{sgn_{3,102}})(\frac{\bar{B_{3}}}{\bar{B_{4}}})_{1234}(1+\frac{sgn_{5,102}}{sgn_{4,102}}(\frac{\bar{B_{5}}}{\bar{B_{4}}})_{12345}(\frac{\bar{B_{4}}}{\bar{B_{5}}})_{1245})
\end{equation}

under the conditions
\begin{equation}\label{isoperimetric1}
 \bar{B_{1}}+\bar{B_{2}}+\bar{B_{3}}+\bar{B_{4}}+\bar{B_{5}} = c
 =constant
\end{equation}

and
\begin{equation}\label{mixedcond2}
\bar{B_{1}}+\bar{B_{2}}+\bar{B_{3}}+\bar{B_{5}}=\bar{B_{0}}+\bar{B_{4}}
\end{equation}

where the weight $\bar{(B_{i})_{12345}}$ corresponds to the vertex
that lies on the ray $A_{0}A_{i},$ for $i=1,2,3,4,5,$ and the
weight $\bar{(B_{j})_{jklm}}$ corresponds to the vertex $A_{j}$
that lies in the ray $A_{0}A_{j}$ regarding the tetrahedron
$A_{j}A_{k}A_{l}A_{m},$ for $j,k,l,m=1,2,3,4,5$ and $j\ne k\ne
l\ne m.$

\end{proposition}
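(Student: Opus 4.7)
The plan is to reduce the five-vertex stationarity problem at $A_{0}$ to the four-vertex case treated in Theorem~\ref{propomix4}, by expressing $a_{04}$ and $a_{05}$ as explicit functions of the three free distances $a_{01}, a_{02}, a_{03}$ and identifying the resulting partial derivatives with weight ratios on two auxiliary sub-tetrahedra. Fixing the vertices $A_{1},\ldots,A_{5}$ and applying the dihedral formula~(\ref{impa03})--(\ref{impa04}) once to $A_{1}A_{2}A_{3}A_{4}$ and once to $A_{1}A_{2}A_{3}A_{5}$ (in each case eliminating the dihedral angle $\alpha$ between the planes $A_{1}A_{2}A_{3}$ and $A_{1}A_{2}A_{0}$, as in the proof of Theorem~\ref{propomix4}) produces explicit functions $a_{04}(a_{01},a_{02},a_{03})$ and $a_{05}(a_{01},a_{02},a_{03})$. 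The hexahedral objective then reads
\begin{equation*}
f=\bar{B_{1}}a_{01}+\bar{B_{2}}a_{02}+\bar{B_{3}}a_{03}+\bar{B_{4}}a_{04}(a_{01},a_{02},a_{03})+\bar{B_{5}}a_{05}(a_{01},a_{02},a_{03}),
\end{equation*}
and the first-order conditions in the three free variables yield, for $i=1,2,3$,
\begin{equation*}
\left(\frac{\bar{B_{i}}}{\bar{B_{4}}}\right)_{12345}=-\frac{\partial a_{04}}{\partial a_{0i}}-\left(\frac{\bar{B_{5}}}{\bar{B_{4}}}\right)_{12345}\frac{\partial a_{05}}{\partial a_{0i}}.
\end{equation*}

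Next, the chain~(\ref{derv1n})--(\ref{derv3nn}) applied to the tetrahedron $A_{1}A_{2}A_{3}A_{4}$ identifies $-\partial a_{04}/\partial a_{0i}$ with $(\bar{B_{i}}/\bar{B_{4}})_{1234}$ up to the orientation sign $-sgn_{4,k0l}/sgn_{i,k0l}$ (with $\{k,l\}=\{1,2,3\}\setminus\{i\}$), the sign recording on which side of the plane $A_{k}A_{0}A_{l}$ the vertex $A_{4}$ lies. The identical computation applied to $A_{2}A_{3}A_{4}A_{5}$, in which $a_{05}$ is expressed in terms of $a_{02}, a_{03}, a_{04}$, supplies $(\bar{B_{4}}/\bar{B_{5}})_{2345}=\sin\alpha_{5,k0l}/\sin\alpha_{4,k0l}$. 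Chaining the two four-vertex identities then lets one rewrite $-\partial a_{05}/\partial a_{0i}$ as $(\bar{B_{i}}/\bar{B_{4}})_{1234}(\bar{B_{4}}/\bar{B_{5}})_{2345}$ times the orientation quotient $-sgn_{5,k0l}/sgn_{i,k0l}$. Substituting these two derivative formulas back into the first-order equation above and factoring $(\bar{B_{i}}/\bar{B_{4}})_{1234}$ out of both terms reproduces precisely~(\ref{dynamicplasticity2}); cycling the index $i$ and the distinguished plane (so that the second auxiliary tetrahedron becomes $A_{1}A_{3}A_{4}A_{5}$ or $A_{1}A_{2}A_{4}A_{5}$ instead of $A_{2}A_{3}A_{4}A_{5}$) produces~(\ref{dynamicplasticity3}) and~(\ref{dynamicplasticity1}) in turn. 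The constraints~(\ref{isoperimetric1})--(\ref{mixedcond2}) come from the total-mass normalization and from summing the inflow and outflow balances~(\ref{weight1outflow})--(\ref{weight12inoutflow}) at $A_{0}$ in the presence of the residual weight $\bar{B_{0}}$, extended here to five incident rays.

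The principal obstacle is the consistent bookkeeping of the signs $sgn_{i,j0k}$. The elimination that produces~(\ref{impa04}) can be completed with either branch of $\arccos$ for the dihedral angle, and the two branches produce derivative formulas that differ by a sign encoding which half-space of the plane $A_{j}A_{0}A_{k}$ the projected vertex occupies. Because the combined identity involves both $A_{4}$ and $A_{5}$, what survives in the final formula is the quotient $sgn_{5,k0l}/sgn_{4,k0l}$, and ensuring that it appears with the correct overall sign in each of~(\ref{dynamicplasticity2})--(\ref{dynamicplasticity1})---and that the sign choices made on the four sub-tetrahedra $1234$, $2345$, $1345$, and $1245$ remain mutually compatible with the fixed orientation of the outward normal $N_{j0k}$---is where the bulk of the careful verification must go.
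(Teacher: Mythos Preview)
Your proposal is correct and follows essentially the same route as the paper: express $a_{04}$ and $a_{05}$ as functions of $a_{01},a_{02},a_{03}$ via the dihedral elimination~(\ref{a04da01a02a03}), differentiate the five-term objective to obtain the three first-order conditions, and then identify each partial derivative $\partial a_{0m}/\partial a_{0i}$ with the appropriate sine ratio~(\ref{derv3nnaijkl}) on a sub-tetrahedron, factoring to produce~(\ref{dynamicplasticity2})--(\ref{dynamicplasticity1}). Your discussion of the $sgn_{i,j0k}$ bookkeeping is more explicit than the paper's, which simply records the signs in~(\ref{eq:fundamentall1n}) without further comment, but the underlying argument is the same.
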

\begin{proof}

By eliminating $\bar{B_{1}}, \bar{B_{2}}, \bar{B_{3}}$ and $
\bar{B_{5}}$ from (\ref{isoperimetric1}) and (\ref{mixedcond2}) we
get:

\begin{equation}\label{mixedcond2}
\bar{B_{4}}=\frac{c-\bar{B_{0}}}{2}
\end{equation}

We assume that the residual weight $\bar{B_{0}}$ could be split at
the mixed weighted Fermat-Torricelli trees of degree four at
$A_{0},$ such that the residual weights  $\bar{B_{0,2345}}$ and
$\bar{B_{0,1345}},$ and $\bar{B_{0,1245}},$ correspond to the
boundary tetrahedra $A_{2}A_{3}A_{4}A_{5},$ $A_{1}A_{3}A_{4}A_{5}$
and $A_{1}A_{2}A_{4}A_{5}.$

We select five initial (given) values $\bar{(B_{i})_{12345}(0)}$
concerning the weights $\bar{(B_{i})_{12345}}$ for $i=1,2,3,4,5$
such that the mixed weighted Fermat-Torricelli point $A_{0}$
exists and it is located at the interior of
$A_{1}A_{2}A_{3}A_{3}A_{5}.$

By applying the method used in the proof of
Theorem~\ref{propomix4}, the length of the linear segments
$a_{04}$, $a_{05}$ can be expressed as functions of $a_{01}$,
$a_{02}$ and $a_{03}$:

\begin{eqnarray}\label{a0ida01a02a03}
&&a_{0i}^2=a_{02}^2 +a_{2i}^2-2 a_{2i}[\sqrt{a_{02}^2-h_{0,12}^2}
\cos\alpha_{12i}{} \nonumber \\
&&{}+h_{0,12}\sin\alpha_{12i}(\cos\alpha_{g_{i}}\left(
\frac{\left(\frac{a_{02}^2+a_{23}^2-a_{03}^2}{2 a_{23}}
\right)-\sqrt{a_{02}^2-h_{0,12}^2}\cos\alpha_{123}}{h_{0,12}\sin\alpha_{123}}
\right)+{} \nonumber \\
&&{}+\sin\alpha_{g_{i}}\sin\arccos\left(
\frac{\left(\frac{a_{02}^2+a_{23}^2-a_{03}^2}{2 a_{23}}
\right)-\sqrt{a_{02}^2-h_{0,12}^2}\cos\alpha_{123}}{h_{0,12}\sin\alpha_{123}}
\right) ) ]\nonumber\\
\end{eqnarray}

for $i=4,5.$

From (\ref{a0ida01a02a03}), we get:
\begin{equation}\label{minimumf}
 \bar{B_1}a_{01}+\bar{B_2}a_{02}+\bar{B_3}a_{03}+\bar{B_4}a_{04}(a_{01},a_{02},a_{03})+\bar{B_5}a_{05}(a_{01},a_{02},a_{03})=minimum.
\end{equation}
By differentiating (\ref{minimumf}) with respect to $a_{01}$,
$a_{02}$ and $a_{03}$ we get:
\begin{equation}\label{eq:2200}
\bar{B_1}+\bar{B_4}\frac{\partial a_{04}}{\partial
a_{01}}+\bar{B_5}\frac{\partial a_{05}}{\partial a_{01}}=0.
\end{equation}
\begin{equation}\label{eq:2100}
\bar{B_{2}}+\bar{B_4}\frac{\partial a_{04}}{\partial
a_{02}}+\bar{B_5}\frac{\partial a_{05}}{\partial a_{02}}=0.
\end{equation}
\begin{equation}\label{eq:2000}
\bar{B_3}+\bar{B_4}\frac{\partial a_{04}}{\partial
a_{03}}+\bar{B_5}\frac{\partial a_{05}}{\partial a_{03}}=0.
\end{equation}

By differentiating (\ref{a0ida01a02a03}) w.r. to $a_{03}$ and by
replacing $\frac{\partial a_{0i}}{\partial a_{03}}$ for $i=4,5$ in
(\ref{eq:2000}), we obtain:

\begin{equation}\label{eq:fundamentall1n}
(\frac{\bar{B_{3}}}{\bar{B_{4}}})_{12345}=-(\frac{sgn_{4,102}}{sgn_{3,102}})\frac{\sin(\alpha_{4,102})}{\sin(\alpha_{3,102})}(1+(\frac{\bar{B_{5}}}{\bar{B_{4}}})_{12345}\frac{sgn_{5,102}}{sgn_{4,102}}\frac{\sin(\alpha_{5,102})}{\sin(\alpha_{4,102})}).
\end{equation}

Taking into account the solution of the inverse mixed weighted
Fermat-Torricelli problem for boundary tetrahedra we derive
(\ref{dynamicplasticity1}). Following a similar evolutionary
process, we derive (\ref{dynamicplasticity3}) and
(\ref{dynamicplasticity2}).

\end{proof}

From Lemma~\ref{importinv2}, the variable weights
$(\bar{B_{1}})_{12345},$ $(\bar{B_{2}})_{12345},$ and
$(\bar{B_{3}})_{12345},$ depend on the weight
$\bar{(B_{5})_{12345}}$ the residual weight $\bar{B_{0}}$ and the
seven given angles $\alpha_{102},$ $\alpha_{103},$ $\alpha_{104},$
$\alpha_{105},$ $\alpha_{203},$ $\alpha_{204}$ and $\alpha_{205}.$

\begin{remark}\rm
We note that numerical examples of the plasticity of tetragonal
pyramids are given in \cite[Examples~3.4,
3.7,p.~844-847]{ZachosZu:11}.
\end{remark}

\section{A generalization of the inverse weighted Fermat-Torricelli problem in $\mathbb{R}^{2}$}

The inverse mixed weighted Fermat-Torricelli problem for three
non-collinear points in $\mathbb{R}^{2}$ states that:

\begin{problem}\label{mixinv5triangle}
Given a point $A_{0}$ which belongs to the interior of $\triangle
A_{1}A_{2}A_{3}$ in $\mathbb{R}^{2}$, does there exist a unique
set of positive weights $\bar{B_{i}},$ such that
\begin{equation}\label{isoptriangle}
 \bar{B_{1}}+\bar{B_{2}}+\bar{B_{3}} = c =const,
\end{equation}
for which $A_{0}$ minimizes
\begin{displaymath}
 f(A_{0})=\sum_{i=1}^{3}\bar{B_{i}}a_{0i}
\end{displaymath}
and
\begin{equation}\label{imp1mixtr}
B_{i}+\tilde{B_{i}}=\bar{B_{i}}
\end{equation}
under the condition for the weights:

\begin{equation}\label{cond3mixtr}
\bar{B_{i}}+\bar{B_{j}}=\bar{B_{0}}+\bar{B_{k}}
\end{equation}
for $i,j,k=1,2,3$ and $i\ne j\ne k$ (Inverse mixed weighted
Fermat-Torricelli problem for three non-collinear points).
\end{problem}

\begin{theorem}\label{propomix4triangle}
Given the mixed weighted Fermat-Torricelli point $A_{0}$ to be an
interior point of the triangle $\triangle A_{1}A_{2}A_{3}$ with
the vertices lie on three prescribed rays that meet at $A_{0}$ and
from the two given values of $\alpha_{102},$ $\alpha_{103},$ the
positive real weights $\bar{B_{i}}$ given by the formulas

\begin{equation}\label{inversemix42tr}
\bar{B_{1}}=-\left(\frac{\sin(\alpha_{103}+\alpha_{102})}{\sin\alpha_{102}}\right)\frac{c-\bar{B_{0}}}{2},
\end{equation}
\begin{equation}\label{inversemix43tr}
\bar{B_{2}}=\left(\frac{\sin\alpha_{103}}{\sin\alpha_{102}}\right)\frac{c-\bar{B_{0}}}{2},
\end{equation}
and
\begin{equation}\label{inversemix41tr}
\bar{B_{3}}=\frac{c-\bar{B_{0}}}{2}
\end{equation}
give a negative answer w.r. to the inverse mixed weighted
Fermat-Torricelli problem for three non-collinear points in
$\mathbb{R}^{2}.$
\end{theorem}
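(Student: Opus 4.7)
The plan is to mirror the derivation of Theorem~\ref{propomix4} in the planar setting, where the argument is considerably simpler because no dihedral angle needs to be eliminated. I first repeat the two-way mass-flow accounting at $A_{0}$: let $B_{1},B_{2}$ denote the masses transported from $A_{1},A_{2}$ into $A_{0}$, let $B_{3}$ denote the mass transported from $A_{0}$ out to $A_{3}$, and let $B_{0}$ denote the residual retained at $A_{0}$; correspondingly let $\tilde{B}_{1},\tilde{B}_{2},\tilde{B}_{3}$ be the reverse flows and $\tilde{B}_{0}$ the reverse residual. Balance at $A_{0}$ yields $B_{1}+B_{2}=B_{0}+B_{3}$ and $\tilde{B}_{1}+\tilde{B}_{2}+\tilde{B}_{0}=\tilde{B}_{3}$. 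Setting $\bar{B}_{i}=B_{i}+\tilde{B}_{i}$ and $\bar{B}_{0}=B_{0}-\tilde{B}_{0}$ and adding, I obtain $\bar{B}_{1}+\bar{B}_{2}=\bar{B}_{3}+\bar{B}_{0}$, which combined with the normalization (\ref{isoptriangle}) immediately gives (\ref{inversemix41tr}): $\bar{B}_{3}=(c-\bar{B}_{0})/2$.

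The second step is to express $a_{03}$ as a function of $(a_{01},a_{02})$. I place $A_{0}$ at the origin with $\vec{u}(A_{0},A_{1})=(1,0)$, $\vec{u}(A_{0},A_{2})=(\cos\alpha_{102},\sin\alpha_{102})$, and $\vec{u}(A_{0},A_{3})=(\cos\alpha_{103},-\sin\alpha_{103})$, the last choice being forced by the requirement that $A_{0}$ lie interior to $\triangle A_{1}A_{2}A_{3}$. Computing the Jacobian between the coordinates of $A_{0}$ and the radial parameters $(a_{01},a_{02})$ --- equivalently, applying the planar law of cosines in $\triangle A_{0}A_{1}A_{2}$ and $\triangle A_{0}A_{2}A_{3}$ and eliminating the auxiliary angle at $A_{2}$ --- yields
\begin{equation*}
\frac{\partial a_{03}}{\partial a_{01}}=\frac{\sin(\alpha_{102}+\alpha_{103})}{\sin\alpha_{102}},\qquad \frac{\partial a_{03}}{\partial a_{02}}=-\frac{\sin\alpha_{103}}{\sin\alpha_{102}}.
\end{equation*}
Differentiating $f=\bar{B}_{1}a_{01}+\bar{B}_{2}a_{02}+\bar{B}_{3}a_{03}(a_{01},a_{02})$ with respect to $a_{01}$ and $a_{02}$ and setting each partial to zero at the minimum gives $\bar{B}_{1}/\bar{B}_{3}=-\sin(\alpha_{102}+\alpha_{103})/\sin\alpha_{102}$ and $\bar{B}_{2}/\bar{B}_{3}=\sin\alpha_{103}/\sin\alpha_{102}$. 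Substituting the expression already obtained for $\bar{B}_{3}$ produces exactly (\ref{inversemix42tr}) and (\ref{inversemix43tr}).

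Since the residual weight $\bar{B}_{0}$ remains a free parameter, a one-parameter family of weight triples $(\bar{B}_{1},\bar{B}_{2},\bar{B}_{3})$ satisfies all the conditions of Problem~\ref{mixinv5triangle} for the prescribed $A_{0}$ and $c$; this is the claimed negative answer. The main subtlety will be the sign bookkeeping in the Jacobian: the orientation of $\vec{u}(A_{0},A_{3})$ must be chosen so that $\alpha_{102}+\alpha_{103}\in(\pi,2\pi)$---equivalently, $\alpha_{203}=2\pi-\alpha_{102}-\alpha_{103}\in(0,\pi)$, the condition for $A_{0}$ to lie inside the triangle---so that the negative coefficient in the formula for $\bar{B}_{1}$ produces a positive weight. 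Once this orientation is pinned down, the remaining computation is routine and strictly simpler than its three-dimensional counterpart.
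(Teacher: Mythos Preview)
Your proposal is correct and follows essentially the same route as the paper: derive $\bar{B}_{3}=(c-\bar{B}_{0})/2$ from the two linear constraints, express $a_{03}$ as a function of $(a_{01},a_{02})$, differentiate the objective, and read off the weight ratios. The only cosmetic difference is that the paper obtains $a_{03}=a_{03}(a_{01},a_{02})$ by specializing the three-dimensional formula (\ref{impa03}) via $\alpha_{g_{i}}=\alpha$, whereas you compute the partial derivatives directly in planar coordinates; your explicit treatment of the orientation/sign issue is a welcome addition that the paper leaves implicit.
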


\begin{proof}
Eliminating $\bar{B_{1}}$ and $\bar{B_{2}}$ from
(\ref{isoptriangle}) and (\ref{cond3mixtr}) we get
(\ref{inversemix41tr}). By setting $\alpha_{g_{i}}=\alpha$ in
(\ref{impa03}), we derive that $a_{03}=a_{03}(a_{01},a_{02}).$ By
differentiating $a_{03}=a_{03}(a_{01},a_{02})$ w.r. to $a_{0i}$
and by replacing $\frac{\partial a_{03}}{\partial a_{0i}}$ for
$i=1,2$ and setting $B_{4}=0$ in (\ref{nobjmod}) we obtain
(\ref{inversemix42tr}) and (\ref{inversemix43tr}).

\end{proof}

\begin{corollary}\label{mixinv1triangle}
If $\alpha_{102}=\alpha_{103}=120^{o},$ $\bar{B_{0}}=\frac{1}{3}$
and
\[\bar{B_{1}}+\bar{B_{2}}+\bar{B_{3}}=1,\]
then
$\bar{B_{1}}=\bar{B_{2}}=\bar{B_{3}}=\bar{B_{0}}=\frac{1}{3}.$
\end{corollary}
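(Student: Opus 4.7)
The plan is a direct substitution check into the three formulas produced by Theorem~\ref{propomix4triangle}. I would first compute the common factor
\[
\frac{c - \bar{B_{0}}}{2} = \frac{1 - \tfrac{1}{3}}{2} = \frac{1}{3},
\]
which, by (\ref{inversemix41tr}), immediately gives $\bar{B_{3}} = \tfrac{1}{3}$.

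Next I would verify $\bar{B_{2}}$ and $\bar{B_{1}}$ separately. Since $\alpha_{102} = \alpha_{103} = 120^{\circ}$, the ratio appearing in (\ref{inversemix43tr}) is $\sin\alpha_{103}/\sin\alpha_{102} = 1$, so $\bar{B_{2}} = \tfrac{1}{3}$. For (\ref{inversemix42tr}) the key trigonometric identity is
\[
\sin(\alpha_{102} + \alpha_{103}) = \sin 240^{\circ} = -\sin 60^{\circ} = -\sin 120^{\circ} = -\sin\alpha_{102},
\]
so the ratio $-\sin(\alpha_{102}+\alpha_{103})/\sin\alpha_{102}$ equals $+1$, and hence $\bar{B_{1}} = \tfrac{1}{3}$.

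Combining the three evaluations yields $\bar{B_{1}} = \bar{B_{2}} = \bar{B_{3}} = \bar{B_{0}} = \tfrac{1}{3}$, as claimed. There is no real obstacle here, since the normalization $\bar{B_{1}}+\bar{B_{2}}+\bar{B_{3}}=1$ is automatically consistent with the three substituted values, and the sign issue in (\ref{inversemix42tr}) resolves itself precisely because $120^{\circ}+120^{\circ} = 240^{\circ}$ lies in the third quadrant and the two sines of $60^{\circ}$ cancel. The corollary should therefore follow from a short computation with no further machinery beyond Theorem~\ref{propomix4triangle}.
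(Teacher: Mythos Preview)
Your proposal is correct and follows essentially the same approach as the paper: direct substitution of $\alpha_{102}=\alpha_{103}=120^{\circ}$, $\bar{B_{0}}=\tfrac{1}{3}$, and $c=1$ into the formulas (\ref{inversemix42tr})--(\ref{inversemix41tr}) of Theorem~\ref{propomix4triangle}. The paper additionally remarks that $\alpha_{203}=2\pi-\alpha_{102}-\alpha_{103}=120^{\circ}$, but otherwise your computation is simply a more explicit version of the same verification.
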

\begin{proof}
By letting
\[\alpha_{102}=\alpha_{103}=120^{o}\]
we have:

\[\alpha_{203}=2\pi-\alpha_{102}-\alpha_{103}=120^{o}\]

 By replacing $\bar{B_{0}}=\frac{1}{3}$ and $c=1$ in (\ref{inversemix42tr}),
(\ref{inversemix43tr}) and (\ref{inversemix41tr}) we derive
$\bar{B_{1}}=\bar{B_{2}}=\bar{B_{3}}=\frac{1}{3}.$

\end{proof}

\begin{corollary}\label{mixinv2triangle}
For
\begin{equation}\label{derv3nnn0tr}
\bar{B_{0}}=c\left(1-\frac{2}{1-\left(\frac{\sin(\alpha_{103}+\alpha_{102})}{\sin\alpha_{102}}\right)+\left(\frac{\sin\alpha_{103}}{\sin\alpha_{102}}\right)}\right).
\end{equation}

we derive a unique solution

\begin{equation}\label{dervnnBi}
\bar{B_{i}}=\frac{c}{1+\frac{\sin\alpha_{j0i}}{\sin\alpha_{j0k}}+\frac{\sin\alpha_{k0i}}{\sin\alpha_{j0k}}}.
\end{equation}
for $i,j,k=1,2,3,$ and $i\neq j\neq k,$ which coincides with the
unique solution of the inverse weighted Fermat-Torricelli problem
for three non-collinear points.
\end{corollary}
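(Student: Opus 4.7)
The plan is to mimic the structure of Corollary~\ref{mixinv2}: substitute the prescribed value of the residual weight $\bar{B_{0}}$ into the three formulas (\ref{inversemix42tr})--(\ref{inversemix41tr}) coming from Theorem~\ref{propomix4triangle}, then rearrange so that the resulting expressions take the symmetric shape (\ref{dervnnBi}), and finally check that this agrees with the classical Gueron--Tessler formula for the inverse weighted Fermat-Torricelli problem for three non-collinear points.

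First I would treat $\bar{B_{3}}$. Setting
\[
D \;=\; 1 \;-\; \frac{\sin(\alpha_{103}+\alpha_{102})}{\sin\alpha_{102}} \;+\; \frac{\sin\alpha_{103}}{\sin\alpha_{102}},
\]
the hypothesis on $\bar{B_{0}}$ yields $c-\bar{B_{0}} = 2c/D$, so (\ref{inversemix41tr}) immediately gives $\bar{B_{3}} = c/D$. To bring $D$ into the form displayed in (\ref{dervnnBi}) I use the angular relation at the mobile vertex $A_{0}$, namely $\alpha_{203}=2\pi-\alpha_{102}-\alpha_{103}$, whence $\sin\alpha_{203}=-\sin(\alpha_{102}+\alpha_{103})$. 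Hence
\[
D \;=\; 1 \;+\; \frac{\sin\alpha_{203}}{\sin\alpha_{102}} \;+\; \frac{\sin\alpha_{103}}{\sin\alpha_{102}},
\]
which is exactly the denominator (\ref{dervnnBi}) for $i=3,\ j=1,\ k=2$ (noting $\sin\alpha_{102}=\sin\alpha_{j0k}$).

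Next I would substitute $c-\bar{B_{0}}=2c/D$ into (\ref{inversemix42tr}) and (\ref{inversemix43tr}). For $\bar{B_{2}}$ this gives $\bar{B_{2}}=c\,\sin\alpha_{103}/(D\sin\alpha_{102})$; multiplying numerator and denominator by $\sin\alpha_{102}/\sin\alpha_{103}$ recasts it as $c$ divided by $1+\sin\alpha_{102}/\sin\alpha_{103}+\sin\alpha_{203}/\sin\alpha_{103}$, again matching (\ref{dervnnBi}) (for $i=2$, with $j=1,k=3$ or $j=3,k=1$, using $\sin\alpha_{j0k}=\sin\alpha_{103}$). The computation for $\bar{B_{1}}$ is identical after replacing $\sin(\alpha_{103}+\alpha_{102})$ by $-\sin\alpha_{203}$, and again rescaling by $\sin\alpha_{102}/\sin\alpha_{203}$.

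The last step is a sanity check: the derived weights sum to $c$ (since $\bar{B_{1}}+\bar{B_{2}}+\bar{B_{3}}=c(1+\sin\alpha_{103}/\sin\alpha_{102}-\sin(\alpha_{103}+\alpha_{102})/\sin\alpha_{102})/D=c$), and the resulting formula does not involve $\bar{B_{0}}$ at all, so it coincides with the unique solution of the (non-mixed) inverse weighted Fermat-Torricelli problem established by Gueron and Tessler \cite{Gue/Tes:02}. The only delicate point in the argument is the sign bookkeeping coming from $\sin\alpha_{203}=-\sin(\alpha_{102}+\alpha_{103})$; once this identity is invoked correctly, the rest is a direct rearrangement and no genuine obstacle remains.
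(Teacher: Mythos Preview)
Your proposal is correct and follows exactly the approach of the paper: substitute the prescribed $\bar{B_{0}}$ from (\ref{derv3nnn0tr}) into (\ref{inversemix42tr})--(\ref{inversemix41tr}) and simplify to recover (\ref{dervnnBi}). The paper's own proof is a single sentence to this effect; you have merely spelled out the algebra, including the angular identity $\sin\alpha_{203}=-\sin(\alpha_{102}+\alpha_{103})$, that the paper leaves implicit.
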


\begin{proof}
By replacing (\ref{derv3nnn0tr}) in (\ref{inversemix42tr}),
(\ref{inversemix43tr}) and (\ref{inversemix41tr}) we obtain
(\ref{dervnnBi}), which yields a positive answer to the inverse
weighted Fermat-Torricelli problem for three non-collinear points
in $\mathbb{R}^{2}.$
\end{proof}

\begin{proposition}\label{mixedabsorbedcase}
Let $\triangle A_{1}A_{2}A_{3}$ be a triangle in $\mathbb{R}^{2}.$
If
\begin{equation}\label{condabsorbed}
\|\bar{B_{1}}\vec{u}(A_3,A_1)+ \bar{B_{2}}\vec{u}(A_3,A_2)\|
\le\bar{B_{3}}
\end{equation}

and

\begin{equation}\label{condabsorbed2}
\bar{B_{1}}+\bar{B_{2}}=\bar{B_{3}}+\bar{B_{0}}
\end{equation}
holds, then the solution w.r. to the inverse mixed weighted
Fermat-Torricelli problem for three non-collinear points in
$\mathbb{R}^{2}$ for the weighted absorbed case is not unique.
\end{proposition}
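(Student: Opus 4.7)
The plan is to show that, under the absorbed-case assumption, the constraints of the inverse mixed weighted Fermat-Torricelli problem fail to pin down the weights $\bar{B_{1}},\bar{B_{2}},\bar{B_{3}}$ uniquely, because the absorption condition~(\ref{condabsorbed}) is an inequality rather than an equation and therefore cuts out an open region in weight space rather than a single point.

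First, I would reduce the degrees of freedom. Combining the isoperimetric condition $\bar{B_{1}}+\bar{B_{2}}+\bar{B_{3}}=c$ with the mixed condition~(\ref{condabsorbed2}) by subtraction yields
\begin{equation*}
\bar{B_{3}}=\frac{c-\bar{B_{0}}}{2}, \qquad \bar{B_{1}}+\bar{B_{2}}=\frac{c+\bar{B_{0}}}{2},
\end{equation*}
so that once $c$ and the residual weight $\bar{B_{0}}$ are prescribed, $\bar{B_{3}}$ is determined and $(\bar{B_{1}},\bar{B_{2}})$ lies on a one-parameter affine line. Parametrize this family by $\bar{B_{1}}=t$, $\bar{B_{2}}=\frac{c+\bar{B_{0}}}{2}-t$, with $t\in\bigl(0,\tfrac{c+\bar{B_{0}}}{2}\bigr)$ to keep both weights positive.

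Next, I would substitute this parametrization into~(\ref{condabsorbed}). Squaring and expanding the norm using $\vec{u}(A_{3},A_{1})\cdot\vec{u}(A_{3},A_{2})=\cos(\angle A_{1}A_{3}A_{2})$ produces the scalar inequality
\begin{equation*}
t^{2}+\Bigl(\tfrac{c+\bar{B_{0}}}{2}-t\Bigr)^{2}+2t\Bigl(\tfrac{c+\bar{B_{0}}}{2}-t\Bigr)\cos(\angle A_{1}A_{3}A_{2})\le\bar{B_{3}}^{2},
\end{equation*}
which is a quadratic in $t$ whose discriminant is strictly positive whenever the hypothesis is satisfied for at least one admissible $t=t_{0}$ with strict inequality, since the angle $\angle A_{1}A_{3}A_{2}\in(0,\pi)$ gives a non-degenerate quadratic form. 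The solution set in $t$ is therefore a non-trivial closed subinterval of $\bigl(0,\tfrac{c+\bar{B_{0}}}{2}\bigr)$; any two distinct values of $t$ in this subinterval produce distinct triples $(\bar{B_{1}},\bar{B_{2}},\bar{B_{3}})$ with $A_{0}=A_{3}$ as weighted Fermat-Torricelli point by Theorem~\ref{theor}(iii), proving non-uniqueness.

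The main obstacle is the boundary case, where~(\ref{condabsorbed}) holds with equality for a unique $t$; here the admissible interval degenerates to a point and the argument breaks down. I would handle this by observing that equality in~(\ref{condabsorbed}) characterizes the transition from the absorbed case to the floating case, so that generically (and in particular whenever strict inequality holds for some admissible $t$) the quadratic inequality above has an open solution set and the non-uniqueness conclusion follows; the degenerate equality configuration is a measure-zero exceptional locus among admissible data and may be excluded from the statement.
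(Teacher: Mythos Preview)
Your argument is correct and more quantitative than the paper's, but it follows a different axis of variation. The paper's proof is a two-line observation: once~(\ref{condabsorbed}) forces $A_{0}\equiv A_{3}$ by Theorem~\ref{theor}(iii), the residual weight $\bar{B_{0}}$ sits at the same vertex as $\bar{B_{3}}$, so one may replace $\bar{B_{3}}$ by the larger value $\bar{B_{3}}+\bar{B_{0}}$; the right-hand side of~(\ref{condabsorbed}) only increases, the inequality persists, and the Fermat--Torricelli point stays at $A_{3}$. In other words, the paper perturbs along the $\bar{B_{3}}$--$\bar{B_{0}}$ direction, whereas you fix $\bar{B_{0}}$ (hence $\bar{B_{3}}$) and perturb along the $(\bar{B_{1}},\bar{B_{2}})$ line.

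Your route buys a sharper conclusion---non-uniqueness persists even when $\bar{B_{0}}$ is held fixed---and makes the one-parameter family of solutions completely explicit via the quadratic in $t$. The price is the boundary case you flag: when~(\ref{condabsorbed}) holds with equality, your interval in $t$ can collapse to a point, and you are forced to exclude that configuration. The paper's perturbation avoids this, because increasing $\bar{B_{3}}$ turns equality into strict inequality immediately, so the equality case is covered without a separate argument. If you want to cover the equality case within your framework, you could simply note that at $t=t_{0}$ with equality one may also decrease $\bar{B_{0}}$ slightly (raising $\bar{B_{3}}$), which re-opens the interval; this is exactly the paper's move grafted onto your parametrization.
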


\begin{proof}
Suppose that we choose three initial weights $\bar{B_{i}}(0)\equiv
\bar{B_{i}},$ such that (\ref{condabsorbed}) holds. From
Theorem~\ref{theor} the weighted absorbed case occurs and the
mixed weighted Fermat-Torricelli point $A_{0}\equiv A_{3}.$ Hence,
if we select a new weight $\bar{B_{3}}+\bar{B_{0}}$ which remains
at the knot $A_{3},$ then (\ref{condabsorbed}) also holds and the
corresponding mixed weighted Fermat-Torricelli point remains the
same $A_{0}^{\prime}\equiv A_{3}.$
\end{proof}

\begin{remark}
Proposition~\ref{mixedabsorbedcase} generalizes the inverse
weighted Fermat-Torricelli problem for three non-collinear points
in the weighted absorbed case.
\end{remark}


Setting a condition with respect to the specific dihedral angles
$\alpha_{g_{3}}=\alpha_{g_{4}}=\alpha$ we obtain quadrilaterals as
a limiting case of tetrahedra on the plane defined by $\triangle
A_{1}A_{0}A_{2}.$ These equations are important, in order to
derive a new plasticity for weighted quadrilaterals in
$\mathbb{R}^{2},$ where the weighted floating case of
Theorem~\ref{theor} occurs.

\begin{theorem}\label{planar plasticity}
If $\alpha_{g_{3}}=\alpha_{g_{4}}=\alpha$  then the following four
equations point out the mixed dynamic plasticity of convex
quadrilaterals in $\mathbb{R}^{2}:$

\begin{equation} \label{plastic1P5quad}
(\frac{\bar{B_2}}{\bar{B_1}})_{1234}=(\frac{\bar{B_2}}{\bar{B_1}})_{123}[1-(\frac{\bar{B_4}}{\bar{B_1}})_{1234}
(\frac{\bar{B_1}}{\bar{B_4}})_{134}],
\end{equation}
\begin{equation} \label{plastic2P5quad}
(\frac{\bar{B_3}}{\bar{B_1}})_{1234}=(\frac{\bar{B_3}}{\bar{B_1}})_{123}[1-(\frac{\bar{B_4}}{\bar{B_1}})_{1234}
(\frac{\bar{B_1}}{\bar{B_4}})_{124}],
\end{equation}

\begin{equation}\label{invcond4quad}
 (\bar{B_{1}})_{1234}+(\bar{B_{2}})_{1234}+(\bar{B_{3}})_{1234}+(\bar{B_{4}})_{1234}=c=constant.
\end{equation}

and

\begin{equation}\label{invcond4quad2}
 (\bar{B_{1}})_{1234}+(\bar{B_{2}})_{1234}+(\bar{B_{3}})_{1234}=(\bar{B_{4}})_{1234}+(\bar{B_{0}})_{1234}.
\end{equation}

\end{theorem}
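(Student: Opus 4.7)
The plan is to adapt the evolutionary procedure of Proposition~\ref{propdynamic1mix} to the planar case, exploiting the fact that $\alpha_{g_{3}}=\alpha_{g_{4}}=\alpha$ forces the four vertices $A_{1},A_{2},A_{3},A_{4}$ to lie in a common plane through $A_{0}$, so the tetrahedral formulas of Section~3 collapse to quadrilateral ones. First I would substitute $\alpha_{g_{3}}=\alpha_{g_{4}}=\alpha$ into (\ref{impa03}) and (\ref{impa04}); the common dihedral angle $\alpha$ becomes the last remaining free parameter and, after elimination against $a_{03}$, one obtains $a_{04}$ as an explicit function of $a_{01}$ and $a_{02}$ only, with $a_{03}=a_{03}(a_{01},a_{02})$ similarly.

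Next I would substitute these into the modified objective $\bar{B_{1}}a_{01}+\bar{B_{2}}a_{02}+\bar{B_{3}}a_{03}+\bar{B_{4}}a_{04}$ and take the first-order conditions $\partial/\partial a_{01}=0$ and $\partial/\partial a_{02}=0$. In the planar floating case the partials $\partial a_{0i}/\partial a_{0j}$ reduce to ratios of sines of the angles $\alpha_{i0j}$ at $A_{0}$, yielding two linear relations among $\bar{B_{1}},\bar{B_{2}},\bar{B_{3}},\bar{B_{4}}$ that play the role of (\ref{eq:2200})-(\ref{eq:2000}) in the three-dimensional argument.

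The third ingredient is to apply Theorem~\ref{propomix4triangle} separately to the boundary triples $\{A_{1},A_{2},A_{3}\}$, $\{A_{1},A_{3},A_{4}\}$ and $\{A_{1},A_{2},A_{4}\}$, producing closed-form sine ratios for $(\bar{B_{2}}/\bar{B_{1}})_{123}$, $(\bar{B_{3}}/\bar{B_{1}})_{123}$, $(\bar{B_{1}}/\bar{B_{4}})_{134}$ and $(\bar{B_{1}}/\bar{B_{4}})_{124}$. Substituting these into the two stationarity relations from the previous step and isolating the contribution of $\bar{B_{4}}$ should recover the bracketed factors appearing in (\ref{plastic1P5quad}) and (\ref{plastic2P5quad}). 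Equations (\ref{invcond4quad}) and (\ref{invcond4quad2}) are then simply the isoperimetric normalization and the mass-flow balance of the inverse mixed weighted Fermat-Torricelli problem transcribed to the planar quadrilateral.

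The main obstacle will be the algebraic bookkeeping in the third step: the factor $[1-(\bar{B_{4}}/\bar{B_{1}})_{1234}(\bar{B_{1}}/\bar{B_{4}})_{134}]$ must be extracted from the difference between the full four-point stationarity equation and the three-point subproblem identity, and this requires careful manipulation of the angles $\alpha_{i,j0k}$ at $A_{0}$. Compared with Proposition~\ref{propdynamic1mix}, the sign symbols $\mathrm{sgn}_{i,j0k}$ all collapse to $+1$ in the planar floating case because every $A_{i}$ lies on the same (positive) side of each line $A_{0}A_{j}$ meeting the convex quadrilateral interior, which is why no sign factors appear in (\ref{plastic1P5quad})-(\ref{plastic2P5quad}).
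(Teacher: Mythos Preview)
The paper does not actually include a proof of this theorem; the statement is followed immediately by the bibliography. Your proposed argument is therefore not being compared against an existing proof but rather against the methodology the paper uses elsewhere, and in that respect it is exactly what the author would presumably have written: it is the planar specialization of the proof of Proposition~\ref{propdynamic1mix}, combined with the triangle solution of Theorem~\ref{propomix4triangle}, precisely as the paragraph preceding Theorem~\ref{planar plasticity} (``Setting a condition with respect to the specific dihedral angles $\alpha_{g_{3}}=\alpha_{g_{4}}=\alpha$ we obtain quadrilaterals as a limiting case of tetrahedra\ldots'') suggests.

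One small correction: your final remark that ``the sign symbols $\mathrm{sgn}_{i,j0k}$ all collapse to $+1$'' is not quite right and does not match the form of (\ref{plastic1P5quad})--(\ref{plastic2P5quad}). Compare with (\ref{dynamicplasticity1}): in three dimensions the bracket is $1+\dots$ with an overall factor $-\mathrm{sgn}_{4,102}/\mathrm{sgn}_{3,102}$, whereas in the planar equations the bracket is $1-\dots$ with no prefactor. In a convex quadrilateral with $A_{0}$ interior, the two diagonals through $A_{0}$ separate opposite vertices, so for instance $A_{2}$ and $A_{4}$ lie on \emph{opposite} sides of the line $A_{1}A_{0}$ (the planar analogue of the plane $A_{j}A_{0}A_{k}$). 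It is this opposite-side configuration, not a common positive side, that produces the minus sign inside the bracket and cancels the outer minus. You will see this concretely when you compute $\partial a_{03}/\partial a_{0i}$ and $\partial a_{04}/\partial a_{0i}$ from the planar reductions of (\ref{impa03})--(\ref{impa04}): the two derivatives carry opposite signs, and that is what yields the $[1-\cdots]$ structure. Apart from this bookkeeping point, your plan is sound and is the intended route.
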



\end{document}